\documentclass[12pt]{amsart}
\usepackage{amsmath}
\usepackage{amsxtra}
\usepackage{amscd}
\usepackage{amsthm}
\usepackage{amsfonts}
\usepackage{amssymb}%,mathabx}
\usepackage{eucal}
\usepackage{mathrsfs}
\usepackage{verbatim}
\usepackage[all]{xy}
\usepackage{imakeidx}
\makeindex

\usepackage{color}
\definecolor{deepjunglegreen}{rgb}{0.0, 0.29, 0.29}
\definecolor{darkspringgreen}{rgb}{0.09, 0.45, 0.27}
\definecolor{Red}{rgb}{0.7, 0,0}

\makeatletter
\usepackage{etex,etoolbox,needspace}
\pretocmd\section{\Needspace*{4\baselineskip}}{}{}
\makeatletter

\usepackage[pdftex,bookmarks=false,colorlinks=true,citecolor=darkspringgreen,debug=true,
  naturalnames=true,pdfnewwindow=true]{hyperref}

\usepackage{tikz-cd}
\usepackage{tikz}
\usetikzlibrary{arrows,decorations.pathmorphing,backgrounds,positioning,fit,petri}
\usetikzlibrary{decorations.markings}
\usetikzlibrary{patterns,calc}
\usetikzlibrary{matrix,positioning,decorations.pathreplacing}

\newtheorem{thm}{Theorem}[subsection]

\newtheorem{lem}[thm]{Lemma}
\newtheorem{prop}[thm]{Proposition}

\theoremstyle{definition}

\theoremstyle{remark}
\newtheorem{rem}[thm]{Remark}
\newcommand{\nc}{\newcommand}
\nc{\renc}{\renewcommand} \nc{\ssec}{\subsection}
\nc{\sssec}{\subsubsection}

\DeclareSymbolFont{symbolsC}{U}{pxsyc}{m}{n}
\SetSymbolFont{symbolsC}{bold}{U}{pxsyc}{bx}{n}
\DeclareFontSubstitution{U}{pxsyc}{m}{n}
\DeclareMathSymbol{\medcirc}{\mathbin}{symbolsC}{7}

\nc{\on}{\operatorname} \nc{\wh}{\widehat}
\nc\ol{\overline} \nc\ul{\underline} \nc\wt{\widetilde}

\newcommand{\red}[1]{{\color{Red}#1}}

\nc{\BA}{{\mathbb{A}}} \nc{\BC}{{\mathbb{C}}} \nc{\BF}{{\mathbb{F}}}
\nc{\BD}{{\mathbb{D}}} \nc{\BG}{{\mathbb{G}}} \nc{\BH}{{\mathbb{H}}} \nc{\BQ}{{\mathbb{Q}}}
\nc{\BM}{{\mathbb{M}}} \nc{\BN}{{\mathbb{N}}} \nc{\BO}{{\mathbb{O}}}
\nc{\BP}{{\mathbb{P}}} \nc{\BR}{{\mathbb{R}}}
\nc{\BZ}{{\mathbb{Z}}} \nc{\BS}{{\mathbb{S}}} \nc{\BW}{{\mathbb{W}}}

\nc{\CA}{{\mathcal{A}}} \nc{\CB}{{\mathcal{B}}} \nc{\CalC}{{\mathcal{C}}} \nc{\CalD}{{\mathcal{D}}}
\nc{\CE}{{\mathcal{E}}} \nc{\CF}{{\mathcal{F}}}
\nc{\CG}{{\mathcal{G}}} \nc{\CH}{{\mathcal{H}}}
\nc{\CI}{{\mathcal{I}}} \nc{\CK}{{\mathcal{K}}} \nc{\CL}{{\mathcal{L}}}
\nc{\CM}{{\mathcal{M}}} \nc{\CN}{{\mathcal{N}}}
\nc{\CO}{{\mathcal{O}}} \nc{\CP}{{\mathcal{P}}}
\nc{\CQ}{{\mathcal{Q}}} \nc{\CR}{{\mathcal{R}}}
\nc{\CS}{{\mathcal{S}}} \nc{\CT}{{\mathcal{T}}}
\nc{\CU}{{\mathcal{U}}} \nc{\CV}{{\mathcal{V}}}  \nc{\CX}{{\mathcal X}}
\nc{\CY}{{\mathcal Y}} \nc{\CW}{{\mathcal{W}}} \nc{\CZ}{{\mathcal{Z}}}

\nc{\cM}{{\check{\mathcal M}}{}} \nc{\csM}{{\check{\mathcal A}}{}}
\nc{\oM}{{\overset{\circ}{\mathcal M}}{}}
\nc{\obM}{{\overset{\circ}{\mathbf M}}{}}
\nc{\oCA}{{\overset{\circ}{\mathcal A}}{}}
\nc{\obA}{{\overset{\circ}{\mathbf A}}{}}
\nc{\ooM}{{\overset{\circ}{M}}{}}
\nc{\osM}{{\overset{\circ}{\mathsf M}}{}}
\nc{\vM}{{\overset{\bullet}{\mathcal M}}{}}
\nc{\nM}{{\underset{\bullet}{\mathcal M}}{}}
\nc{\oD}{{\overset{\circ}{\mathcal D}}{}}
\nc{\obD}{{\overset{\circ}{\mathbf D}}{}}
\nc{\op}{{\overset{\bullet}{\mathbf p}}{}}
\nc{\cp}{{\overset{\circ}{\mathbf p}}{}}
\nc{\oU}{{\overset{\bullet}{\mathcal U}}{}}
\nc{\ofZ}{{\overset{\circ}{\mathfrak Z}}{}}

\nc{\fa}{{\mathfrak{a}}} \nc{\fb}{{\mathfrak{b}}}
\nc{\fd}{{\mathfrak{d}}} \nc{\fe}{{\mathfrak{e}}} \nc{\ff}{{\mathfrak{f}}}
\nc{\fg}{{\mathfrak{g}}} \nc{\fgl}{{\mathfrak{gl}}}
\nc{\fh}{{\mathfrak{h}}} \nc{\fri}{{\mathfrak{i}}}
\nc{\fj}{{\mathfrak{j}}} \nc{\fk}{{\mathfrak{k}}} \nc{\fl}{{\mathfrak{l}}}
\nc{\fm}{{\mathfrak{m}}} \nc{\fn}{{\mathfrak{n}}}
\nc{\ft}{{\mathfrak{t}}} \nc{\fu}{{\mathfrak{u}}} \nc{\fv}{{\mathfrak{v}}}
\nc{\fw}{{\mathfrak{w}}} \nc{\fz}{{\mathfrak{z}}}
\nc{\fp}{{\mathfrak{p}}} \nc{\fq}{{\mathfrak{q}}} \nc{\frr}{{\mathfrak{r}}}
\nc{\fs}{{\mathfrak{s}}} \nc{\fsl}{{\mathfrak{sl}}}
\nc{\fso}{{\mathfrak{so}}} \nc{\fsp}{{\mathfrak{sp}}} \nc{\osp}{{\mathfrak{osp}}}
\nc{\hsl}{{\widehat{\mathfrak{sl}}}}
\nc{\hgl}{{\widehat{\mathfrak{gl}}}}
\nc{\hg}{{\widehat{\mathfrak{g}}}}
\nc{\chg}{{\widehat{\mathfrak{g}}}{}^\vee}
\nc{\hn}{{\widehat{\mathfrak{n}}}}
\nc{\chn}{{\widehat{\mathfrak{n}}}{}^\vee}

\nc{\fA}{{\mathfrak{A}}} \nc{\fB}{{\mathfrak{B}}} \nc{\fC}{{\mathfrak{C}}}
\nc{\fD}{{\mathfrak{D}}} \nc{\fE}{{\mathfrak{E}}}
\nc{\fF}{{\mathfrak{F}}} \nc{\fG}{{\mathfrak{G}}} \nc{\fH}{{\mathfrak{H}}}
\nc{\fI}{{\mathfrak{I}}} \nc{\fJ}{{\mathfrak{J}}}
\nc{\fK}{{\mathfrak{K}}} \nc{\fL}{{\mathfrak{L}}}
\nc{\fM}{{\mathfrak{M}}} \nc{\fN}{{\mathfrak{N}}}
\nc{\fP}{{\mathfrak{P}}} \nc{\fQ}{{\mathfrak{Q}}}
\nc{\fS}{{\mathfrak{S}}} \nc{\fT}{{\mathfrak{T}}} \nc{\fU}{{\mathfrak{U}}}
\nc{\fV}{{\mathfrak{V}}} \nc{\fW}{{\mathfrak{W}}}
\nc{\fX}{{\mathfrak{X}}} \nc{\fY}{{\mathfrak{Y}}}
\nc{\fZ}{{\mathfrak{Z}}}

\nc{\ba}{{\mathbf{a}}} \nc{\bk}{{\mathbf{k}}} \nc{\bl}{{\mathbf{l}}}
\nc{\bb}{{\mathbf{b}}} \nc{\bc}{{\mathbf{c}}} \nc{\be}{{\mathbf{e}}}
\nc{\bg}{{\mathbf{g}}} \nc{\bi}{{\mathbf{i}}} \nc{\bj}{{\mathbf{j}}} \nc{\bm}{{\mathbf{m}}}
\nc{\bn}{{\mathbf{n}}} \nc{\bp}{{\mathbf{p}}}
\nc{\bq}{{\mathbf{q}}} \nc{\br}{{\mathbf{r}}} \nc{\bt}{{\mathbf{t}}}
\nc{\bfu}{{\mathbf{u}}} \nc{\bv}{{\mathbf{v}}}
\nc{\bx}{{\mathbf{x}}} \nc{\by}{{\mathbf{y}}} \nc{\bz}{{\mathbf{z}}}
\nc{\bw}{{\mathbf{w}}} \nc{\bA}{{\mathbf{A}}}
\nc{\bB}{{\mathbf{B}}} \nc{\bC}{{\mathbf{C}}} \nc{\bE}{{\mathbf{E}}}
\nc{\bD}{{\mathbf{D}}} \nc{\bF}{{\mathbf{F}}} \nc{\bG}{{\mathbf{G}}}
\nc{\bH}{{\mathbf{H}}} \nc{\bI}{{\mathbf{I}}} \nc{\bJ}{{\mathbf{J}}}
\nc{\bK}{{\mathbf{K}}} \nc{\bL}{{\mathbf{L}}} \nc{\bM}{{\mathbf{M}}}
\nc{\bN}{{\mathbf{N}}}
\nc{\bO}{{\mathbf{O}}} \nc{\bS}{{\mathbf{S}}} \nc{\bT}{{\mathbf{T}}}
\nc{\bU}{{\mathbf{U}}} \nc{\bV}{{\mathbf{V}}} \nc{\bW}{{\mathbf{W}}}
\nc{\bX}{{\mathbf{X}}}
\nc{\bY}{{\mathbf{Y}}} \nc{\bP}{{\mathbf{P}}}
\nc{\bZ}{{\mathbf{Z}}} \nc{\bh}{{\mathbf{h}}}

\nc{\sA}{{\mathsf{A}}} \nc{\sB}{{\mathsf{B}}}
\nc{\sC}{{\mathsf{C}}} \nc{\sD}{{\mathsf{D}}}
\nc{\sE}{{\mathsf{E}}} \nc{\sF}{{\mathsf{F}}} \nc{\sG}{{\mathsf{G}}} \nc{\sH}{{\mathsf{H}}}
\nc{\sI}{{\mathsf{I}}} \nc{\sK}{{\mathsf{K}}} \nc{\sL}{{\mathsf{L}}}
\nc{\sfm}{{\mathsf{m}}} \nc{\sM}{{\mathsf{M}}} \nc{\sN}{{\mathsf{N}}}
\nc{\sO}{{\mathsf{O}}} \nc{\sQ}{{\mathsf{Q}}} \nc{\sP}{{\mathsf{P}}}
\nc{\sT}{{\mathsf{T}}} \nc{\sZ}{{\mathsf{Z}}}
\nc{\sV}{{\mathsf{V}}} \nc{\sW}{{\mathsf{W}}} \nc{\sg}{{\mathsf{g}}}
\nc{\sfp}{{\mathsf{p}}} \nc{\sq}{{\mathsf{q}}} \nc{\sr}{{\mathsf{r}}}
\nc{\sfs}{{\mathsf{s}}} \nc{\st}{{\mathsf{t}}} \nc{\sfb}{{\mathsf{b}}}
\nc{\sfc}{{\mathsf{c}}} \nc{\sd}{{\mathsf{d}}}
\nc{\sz}{{\mathsf{z}}}

\nc{\tA}{{\widetilde{\mathbf{A}}}}
\nc{\tB}{{\widetilde{\mathcal{B}}}}
\nc{\tg}{{\widetilde{\mathfrak{g}}}} \nc{\tG}{{\widetilde{G}}}
\nc{\TM}{{\widetilde{\mathbb{M}}}{}}
\nc{\tO}{{\widetilde{\mathsf{O}}}{}}
\nc{\tU}{{\widetilde{\mathfrak{U}}}{}} \nc{\TZ}{{\tilde{Z}}}
\nc{\tx}{{\tilde{x}}} \nc{\tbv}{{\tilde{\bv}}}
\nc{\tfP}{{\widetilde{\mathfrak{P}}}{}} \nc{\tz}{{\tilde{\zeta}}}
\nc{\tmu}{{\tilde{\mu}}}

\nc{\urho}{\underline{\rho}} \nc{\uB}{\underline{B}}
\nc{\uC}{{\underline{\mathbb{C}}}} \nc{\ui}{\underline{i}}
\nc{\uj}{\underline{j}} \nc{\ofP}{{\overline{\mathfrak{P}}}}
\nc{\oB}{{\overline{\mathcal{B}}}}
\nc{\og}{{\overline{\mathfrak{g}}}} \nc{\oI}{{\overline{I}}}

\nc{\eps}{\varepsilon} \nc{\hrho}{{\hat{\rho}}} \nc{\balpha}{{\boldsymbol{\alpha}}}
\nc{\bgamma}{{\boldsymbol{\gamma}}}
\nc{\blambda}{{\boldsymbol{\lambda}}} \nc{\bmu}{{\boldsymbol{\mu}}} \nc{\bnu}{{\boldsymbol{\nu}}}
\nc{\btheta}{{\boldsymbol{\theta}}} \nc{\bzeta}{{\boldsymbol{\zeta}}} \nc{\bta}{{\boldsymbol{\eta}}}
\nc{\bbeta}{{\boldsymbol{\beta}}} \nc{\bkappa}{{\boldsymbol{\kappa}}} \nc{\bomega}{{\boldsymbol{\omega}}}

\nc{\one}{{\mathbf{1}}} \nc{\two}{{\mathbf{t}}}

\nc\DMO\DeclareMathOperator
\DMO\Sym{Sym}%\nc{\Sym}{\mathop{\operatorname{\rm Sym}}}
\nc{\Tot}{{\mathop{\operatorname{\rm Tot}}}}
\nc{\Spec}{\mathop{\operatorname{\rm Spec}}}
\nc{\Ker}{{\mathop{\operatorname{\rm Ker}}}}
\nc{\Isom}{{\mathop{\operatorname{\rm Isom}}}}
\nc{\Hilb}{{\mathop{\operatorname{\rm Hilb}}}}
\nc{\deeq}{{\mathop{\operatorname{\rm deeq}}}}
\nc{\End}{{\mathop{\operatorname{\rm End}}}}
\nc{\Ext}{{\mathop{\operatorname{\rm Ext}}}}
\nc{\Hom}{{\mathop{\operatorname{\rm Hom}}}}
\nc{\CHom}{{\mathop{\operatorname{{\mathcal{H}}\it om}}}}
\nc{\GL}{{\mathop{\operatorname{\rm GL}}}}
\nc{\PGL}{{\mathop{\operatorname{\rm PGL}}}}
\nc{\SL}{{\mathop{\operatorname{\rm SL}}}}
\nc{\SO}{{\mathop{\operatorname{\rm SO}}}}
\nc{\Sp}{{\mathop{\operatorname{\rm Sp}}}}
\nc{\OSp}{{\mathop{\operatorname{\rm SOSp}}}}
\nc{\gr}{{\mathop{\operatorname{\rm gr}}}}
\nc{\Id}{{\mathop{\operatorname{\rm Id}}}}
\nc{\perf}{{\mathop{\operatorname{\rm perf}}}}
\nc{\defi}{{\mathop{\operatorname{\rm def}}}}
\nc{\length}{{\mathop{\operatorname{\rm length}}}}
\nc{\supp}{{\mathop{\operatorname{\rm supp}}}}
\nc{\HC}{{\mathcal H}{\mathcal C}}
\nc{\Aut}{{\mathop{\operatorname{\rm Aut}}}}

\nc{\pr}{{\operatorname{pr}}}
\nc{\Cliff}{{\mathsf{Cliff}}}
\nc{\loc}{{\operatorname{loc}}} \nc{\lc}{{\operatorname{lc}}}
\nc{\Fl}{{\mathcal{F}\ell}}
\nc{\Fib}{{\mathsf{Fib}}}
\nc{\Coh}{{\mathsf{Coh}}} \nc{\FCoh}{{\mathsf{FCoh}}}
\nc{\Perf}{{\mathsf{Perf}}}
\nc{\wtimes}{\mathbin{\widetilde\times}}

\nc{\reg}{{\text{\rm reg}}} \nc{\ren}{{\text{\rm ren}}}
\nc{\self}{{\text{\rm self}}}
\nc{\gvee}{{\mathfrak g}^{\!\scriptscriptstyle\vee}}
\nc{\tvee}{{\mathfrak t}^{\!\scriptscriptstyle\vee}}
\nc{\nvee}{{\mathfrak n}^{\!\scriptscriptstyle\vee}}
\nc{\bvee}{{\mathfrak b}^{\!\scriptscriptstyle\vee}}
       \nc{\rhovee}{\rho^{\!\scriptscriptstyle\vee}}

\nc{\cplus}{{\mathbf{C}_+}} \nc{\cminus}{{\mathbf{C}_-}}
\nc{\cthree}{{\mathbf{C}_*}} \nc{\Qbar}{{\bar{Q}}}

\newcommand\iso{\mathbin{\vphantom{j^{X^2}}\smash{\overset{\sim}{\vphantom{\rule{0pt}{0.20em}}\smash{\longrightarrow}}}}}
\nc{\Gtimes}{\vphantom{j^{X^2}}\smash{\overset{G}{\vphantom{\rule{0pt}{0.30em}}\smash{\times}}}}
\nc{\sGtimes}{\vphantom{j^{X^2}}\smash{\overset{\mathsf G}{\vphantom{\rule{0pt}{0.30em}}\smash{\times}}}}

\nc{\bOmega}{{\overline{\Omega}}}

\nc{\seq}[1]{\stackrel{#1}{\sim}}

\nc{\aff}{{\operatorname{aff}}}
\nc{\fin}{{\operatorname{fin}}}
\nc{\mir}{{\operatorname{mir}}}
\nc{\triv}{{\operatorname{triv}}}
\nc{\ext}{{\operatorname{ext}}}
\nc{\righ}{{\operatorname{right}}}
\nc{\lef}{{\operatorname{left}}}
\nc{\forg}{{\operatorname{forg}}}
\nc{\fid}{{\operatorname{fd}}}
\nc{\odd}{{\operatorname{odd}}}
\nc{\even}{{\operatorname{even}}}
\nc{\modu}{{\operatorname{-mod}}}
\nc{\Gr}{{\operatorname{Gr}}}
\nc{\FT}{{\operatorname{FT}}}
\nc{\Mat}{{\operatorname{Mat}}}
\nc{\MSt}{{\operatorname{MSt}}}
\nc{\sph}{{\operatorname{sph}}}
\nc{\GR}{{\mathbf{Gr}}}
\nc{\Perv}{{\operatorname{Perv}}}
\nc{\Rep}{{\operatorname{Rep}}}
\nc{\Ind}{{\operatorname{Ind}}}
\nc{\ind}{{\operatorname{ind}}}
\nc{\IC}{{\operatorname{IC}}}
\nc{\Bun}{{\operatorname{Bun}}}
\nc{\Proj}{{\operatorname{Proj}}}
\nc{\Stab}{{\operatorname{Stab}}}
\nc{\pt}{{\operatorname{pt}}}
\nc{\bfmu}{{\boldsymbol{\mu}}}
\nc{\bfomega}{{\boldsymbol{\omega}}}
\nc{\calM}{\mathcal M}
\nc{\calA}{\mathcal A}
\nc{\calO}{\mathcal O}
\nc{\CC}{\mathcal C}
\nc{\calN}{\mathcal N}
\nc{\grg}{\mathfrak g}
\nc{\dslash}{/\!\!/}
\nc{\tslash}{/\!\!/\!\!/}
\nc\grt{\mathfrak t}
\nc\bfM{\mathbf M}
\nc\bfN{\mathbf N}
\nc\Sig{\Sigma}
\nc\ZZ{\mathbb{Z}}
\nc\calC{\mathcal C}
\nc\calF{\mathcal F}
\nc\calX{\mathcal X}
\nc\calY{\mathcal Y}
\nc\QCoh{\operatorname{QCoh}}
\nc\IndCoh{\operatorname{IndCoh}}
\nc\Maps{\operatorname{Maps}}
\nc\Dmod{\operatorname{D-mod}}
\newcommand\Hecke{\operatorname{Hecke}}
\nc{\calD}{\mathcal D}
\nc\bfO{\mathbf O}

\nc\GG{\mathbb G}
\nc\calK{\mathcal K}
\nc{\calG}{\mathcal G}
\nc\RHom{\operatorname{RHom}}
\nc\Res{\operatorname{Res}}
\nc\Av{\operatorname{Av}}
\nc{\RH}{{\operatorname{RH}}}
\nc{\RT}{{\operatorname{RT}}}
\nc{\DR}{{\operatorname{DR}}}
\nc{\Ome}{\Omega}
\nc\Lam{\Lambda}

\nc{\bset}{{{\mathbf s}{\mathrm{et}}}}
\nc{\bSet}{{{\mathbf S}{\mathrm{et}}}}
\nc{\CSet}{{{\mathcal S}{\mathrm{et}}}}
\nc{\SET}{{\mathrm{SET}}}
\nc{\Vect}{{\mathrm{Vect}}}
\nc{\bvect}{{{\mathbf v}{\mathrm{ect}}}}
\nc{\CVect}{{{\mathcal V}{\mathrm{ect}}}}
\nc{\VECT}{{\mathrm{VECT}}}
\nc{\CRep}{{{\mathcal R}{\mathrm{ep}}}}

\nc\grs{\mathfrak s}
\nc{\tilX}{\widetilde X}
\nc\calB{\mathcal B}
\nc\calS{\mathcal S}
\nc\calT{\mathcal T}
\nc\calZ{\mathcal Z}
\nc\LS{\operatorname{LocSys}}
\nc\bfL{\on{\mathbf L}}

\newcommand*\circled[1]%{\tikz[baseline=(char.base)]{
{*{#1}}
\makeatletter
\newcommand{\raisemath}[1]{\mathpalette{\raisem@th{#1}}}
\newcommand{\raisem@th}[3]{\raisebox{#1}{$#2#3$}}
\nc{\binlim}[2][]{\def\@tempa{#1}\@ifnextchar^{\@binlim{#2}}{\@binlim{#2}^{}}}
\def\@binlim#1^#2{\mathbin{\@ifempty{#2}{\mathop{#1}}{\mathop{#1}\@xp\displaylimits\@tempa^{#2}}}}

\makeatother

\nc\cX{{\mathcal X}}

\nc\Gm{{\mathbb G_m}}
\nc{\isoto}%{\mathbin{\widetilde\to}} %
    {\stackrel{\displaystyle\raisemath{-.2ex}{\sim}}\to}
\nc\cD\CalD
\nc\setm\setminus
\nc\cE\CE
\renc\Hecke{\mathit{\CH\kern-.2ex ecke}}
\nc\bsl\backslash
\nc\Fq{\mathbb F_q}
\nc\x\times
\nc\bGO{{\bG_\bO}}
\nc\bla\blambda
\nc\blt\bullet
\nc\opp{{\on{op}}}
\nc\srel\stackrel
\nc\oast\circledast
\nc\tbx{\binlim{\widetilde\boxtimes{}}}
\nc\into\hookrightarrow
\nc\otto\leftrightarrow
\renc\setminus\smallsetminus
\nc\phitau{\varphi\tau}
\newenvironment{i-ii-iii}{%
\begin{enumerate}
}%
{\end{enumerate}}
\nc\ceil[1]{\lceil#1\rceil}  \nc\floor[1]{\lfloor#1\rfloor}
\nc\Lie{\on{Lie}}
\nc\sS{{\mathsf S}}
\nc\vvv{\ensuremath{\red\surd}}
\nc\la\lambda
\def\arxiv#1{\href{http://arxiv.org/abs/#1}{\tt arXiv:#1}} 
\nc\kap{\kappa}
\nc\gra{\mathfrak a}
\nc\gl{\mathfrak{gl}}
\nc\sTr{\operatorname{sTr}}
\nc\hatG{\widehat{G}}
\nc\calL{\mathcal L}
\nc\Whit{\operatorname{Whit}}
\nc\KL{\operatorname{KL}}
\tikzset{GL/.style={draw,%fill=blue,
    minimum size=12pt,inner sep=0pt, outer sep=0pt}}
\tikzset{SO/.style={draw,%fill=red,
    minimum size=12pt,inner sep=0pt, outer sep=0pt,font={$+$}}}
\tikzset{Sp/.style={draw,%fill=blue,
    minimum size=12pt,inner sep=0pt, outer sep=0pt,font={$-$}}}
\tikzset{pm/.style={draw,%fill=blue,
    minimum size=12pt,inner sep=0pt, outer sep=0pt,font={$\pm$}}}
\tikzset{mp/.style={draw,%fill=blue,
    minimum size=12pt,inner sep=0pt, outer sep=0pt,font={$\mp$}}}

\makeatletter
\renewcommand{\subsection}{\@startsection{subsection}{2}{0pt}{-3ex
plus -1ex minus -0.2ex}{-2mm plus -0pt minus
    -2pt}{\normalfont\bfseries}} \makeatother

\nc{\svee}{{\!\scriptscriptstyle\vee}}

\numberwithin{equation}{subsection}
\allowdisplaybreaks
\nc\mto{\mapsto }
\nc\en{\enspace }
\nc\DD{\mathbb{D}}

\begin{document}

\author[M.Finkelberg]{Michael Finkelberg}
\address{Einstein Institute of Mathematics, The Hebrew University of Jerusalem,
  Edmond J. Safra Campus, Giv’at Ram, Jerusalem, 91904, Israel;
\newline  National Research University Higher School of Economics
%  Russian Federation, Department of Mathematics, 6 Usacheva St, 119048 Moscow;
}
\email{fnklberg@gmail.com}

\author[D.Kazhdan]{David Kazhdan}
\address{Einstein Institute of Mathematics, The Hebrew University of Jerusalem,
  Edmond J. Safra Campus, Giv’at Ram, Jerusalem, 91904, Israel}
\email{kazhdan@mail.huji.ac.il}

\title[$\Gamma$-function over a two-dimensional local field]
  {An analogue of $\Gamma$-function for irreducible cuspidal representations
  of the group $\GL(2)$ over a two-dimensional local field}

\begin{abstract}
Let $\bF$ be a non-archimedian local field of odd residue characteristic,
$\CO=\bF[\![t]\!]\subset\CK=\bF(\!(t)\!)$. We consider the subgroup
$\wh\GL(2,\CO)=\CK^\times\cdot\GL(2,\CO)\subset\GL(2,\CK)$, and the normalizer
$\GL'(2,\CO)\subset\GL(2,\CK)$ of an Iwahori subgroup $I\subset\GL(2,\CK)$.
%generated by the upper Iwahori subgroup and $\begin{pmatrix}0&1\\ -t&0\end{pmatrix}$.
  We calculate the $\Gamma$-functions for irreducible special representations
  of $\wh\GL(2,\CO)$ and $\GL'(2,\CO)$.
\end{abstract}  

\maketitle
  
\section{Introduction}
\subsection{$\Gamma$-functions for local fields}
The notion of $\Gamma$-functions for representations of a general linear
group over a nonarchimedian local field $\bF$ goes back to~\cite{GK}, cf.~\cite{JPS} and~\cite{O}.
Under the Langlands correspondence with Galois representations, the $\Gamma$-function of a cuspidal representation goes to the $\varepsilon$-factor of the
corresponding Galois representation.

The theory of local $\Gamma$-functions associates numerical invariants to
representations of general linear groups over non-Archimedean local fields.
For an irreducible cuspidal representation of $\mathrm{PGL}(2)$, these
invariants admit a direct representation-theoretic description: in the
Kirillov model, the nontrivial Weyl element takes a multiplicative character
to its inverse, multiplied by a scalar. As the character varies, these
scalars form the $\Gamma$-function of the representation. This description
provides a starting point for defining analogous invariants over
two-dimensional local fields.

Let $\CK=\bF(\!(t)\!)\supset\CO=\bF[\![t]\!]$, and assume that the residue characteristic of $\bF$
is odd. 

Braverman--Kazhdan~\cite{BK} constructed analogues of irreducible cuspidal
representations over $\mathcal K$, using special representations of certain
bounded subgroups of $\mathrm{PGL}(2,\mathcal K)$. In this paper we define
and calculate $\Gamma$-functions for the special representations considered
in that construction. We treat both $\mathrm{PGL}(2,\mathcal O)$ and the
normalizer $G'(\mathcal O)$ of an Iwahori subgroup, and explain the
modifications required for $\mathrm{GL}(2)$.

The defining property of a special representation is the irreducibility
of its restriction to the upper triangular mirabolic subgroup. At level
$n$, this restriction has a realization in the Schwartz space
\[
\mathcal S(\mathcal O_n^\times),
\qquad
\mathcal O_n=\mathcal O/t^{n+1}\mathcal O.
\]
A multiplicative character $\omega$ of $\mathcal O_n^\times$ can be viewed
as a generalized vector in the corresponding dual space. For a special
representation $\pi_\theta$, we define the scalar
$\Gamma(\pi_\theta,\omega;\psi_n)$ by
\[
\pi_\theta^\wedge(w)\omega
=\Gamma(\pi_\theta,\omega;\psi_n)\,\omega^{-1},
\]
where
\[
w=\begin{pmatrix}0&1\\-1&0\end{pmatrix}
\quad\text{or}\quad
w=\begin{pmatrix}0&1\\-t&0\end{pmatrix},
\]
according to the bounded subgroup under consideration. Here $\psi_n$ is
obtained from a fixed nontrivial additive character of $\mathbf F$ by
taking the coefficient of $t^n$.

Our main results give explicit formulas for these scalars in terms of the
torus character $\theta$, the twisting character $\omega$, and their
restrictions to congruence subgroups. The calculations distinguish the
unramified and ramified quadratic extensions of $\mathcal K$ that
parametrize the representations. In the unramified case, odd and even
depths require different arguments. At odd depth, the calculation reduces
to character evaluations. At even depth, a Heisenberg representation and
its Weil operators enter the construction, and the resulting formula
contains a quadratic Weil index and a factor determined by the chosen
splitting over the torus. In the ramified case, the formulas are expressed
directly through character evaluations.

The proofs use explicit intertwining operators between models of the
mirabolic representation. These operators reduce the calculation of the
Weyl action to Fourier inversion and, in the even-depth case, a
one-dimensional quadratic oscillatory integral over $\mathbf F$. Keeping
the normalizations explicit also describes the dependence on the additive
character and on the splitting used to parametrize the representations.

We begin by recalling the corresponding construction over a
one-dimensional local field. We then carry out the two-dimensional
calculations, first for the unramified and then for the ramified case.
Finally, we give analogues for the classical case of $\GL(2,\bF)$ and extend the construction
to $\mathrm{GL}(2)$, where the central character $\mu$ replaces the target
character $\omega^{-1}$ by $\mu\omega^{-1}$.

\subsection{Acknowledgments}
We are grateful to N.~Gurevich for the useful discussions, and to ChatGPT~6 for the numerous corrections, references, and for
the proof of~Lemmas~\ref{RRA},\ref{Gerardin}.
During the preparation of this paper, D.K.\ was partially supported by ERC grant no.~101142781, while
the research of M.F.\ was supported by the Israel Science
Foundation (grant No.\ 994/24).

\section{Reminder about representations of $\PGL(2,\bF)$}
\label{field F}

\subsection{Setup}
\label{setup}
Let $\bF$ be a local field of odd residue characteristic. Let $\bO\subset\bF$ be the ring of integers, and let
$\bk=\BF_q$ be the residue field. Let $\varpi\in\bO$ be a uniformizer.
We set $G=\PGL(2)\supset P=\left\{\begin{pmatrix}a&b\\ 0&1\end{pmatrix}\right\}$. Recall that an irreducible representation of $G(\bF)$
is cuspidal iff its restriction to $P(\bF)$ is isomorphic to a unique infinite-dimensional irreducible representation of $P(\bF)$.
The latter one is the compact induction $\ind_{\bF}^{P(\bF)}(\psi)\simeq\CS(\bF^\times)$. Here $\bF\subset P(\bF)$ is the group of strictly
upper triangular matrices, $\CS(\bF^\times)$ is the Schwartz space of compactly supported locally constant functions, and
$\psi$ is a nontrivial additive character $\bF\to\BC^\times$.

Any maximal compact subgroup of $G(\bF)$ is conjugate to either $G(\bO)$ or $G'(\bO)$: the subgroup generated by
$\begin{pmatrix}0&1\\ -\varpi&0\end{pmatrix}$ and the Iwahori subgroup
$I=\left\{\begin{pmatrix}a&b\\ c&d\end{pmatrix} : a,d\in\bO^\times,\ b\in\bO,\ c\in \varpi\bO\right\}\subset G'(\bO)$.
Any irreducible cuspidal representation $\Pi$ of $G(\bF)$ is compactly induced from a {\em special} finite dimensional representation $\pi$ of
$G(\bO)$ or $G'(\bO)$, see e.g.~\cite[Chapter 4]{BH}. Recall that $\pi$ is called special if its restriction to $P(\bO)$ is an irreducible
representation of $P(\bO)$ of dimension~$>1$. The latter representation is realized in the space of functions $\BC[\bO^\times_n]$ for some
$n\in\BN$, where $\bO^\times_n=\bO^\times/(1+\varpi^{n+1}\bO)$, cf.~\cite[\S4]{BK}. It factors through $P(\bO)\twoheadrightarrow P(\bO)_n=P(\bO)/P(\bO)(n+1)$,
where $P(\bO)(n+1)$ is the intersection of $P(\bO)$ with the $n+1$-th congruence subgroup of $G(\bO)$. The corresponding representation
of $P(\bO)_n$ is induced from an additive character $\bar\psi_n$ of the strictly upper triangular subgroup $\bO_n\subset P(\bO)_n$
such that $\bar\psi_n$ is nontrivial on the lowest level subgroup $\bk\simeq\bO_n(n)\subset\bO_n$.

The special representations are indexed by certain characters of $T^\bE$,
where $\bE$ is a quadratic field extension of $\bF$, and $T^\bE=\bE^\times/\bF^\times$.

\subsection{$\Gamma$-functions}
\label{Gamma classical}
For the sake of definiteness, consider a special representation $\pi$ of $G'(\bO)$ of depth $n+\frac12$.
(The discussion below has a direct analogue for $G(\bO)$.) Then $\pi$ corresponds to a character ${\theta}\colon T^\bE\to\BC^\times$
for a ramified quadratic extension $\bE$ of $\bF$, $\pi=\pi_{\theta}$. We choose a uniformizer $\varpi\in\bO$ in such a way that
$\bE=\bF(\sqrt{-\varpi})$. Given a character $\bar\omega\colon\bO^\times_n\to\BC^\times$,
we consider it as a function $\bar\omega\in\BC[\bO^\times_n]$. We view $\BC[\bO^\times_n]$ as the underlying space of the special irreducible
representation $\pi_{\theta}$ of $G'(\bO)$ via the identifications of~\S\ref{setup}, and apply to $\bar\omega\in\BC[\bO^\times_n]$ the operator
$\pi_{\theta}\begin{pmatrix}0&1\\ -\varpi&0\end{pmatrix}$. Then we get $\bar\omega^{-1}$ up to a constant multiple
$\Gamma(\pi_{\theta},\bar\omega;\bar\psi_n)$.

On the other hand, we extend $\bar\omega\colon\bO^\times\twoheadrightarrow\bO^\times_n\to\BC^\times$ to a multiplicative character
$\omega\colon\bF^\times\to\BC^\times$ such that $\omega(\varpi)=1$. We also extend the additive character
$\bar\psi_n\colon\bO\twoheadrightarrow\bO_n\to\BC^\times$ to an additive character $\psi\colon\bF\to\BC^\times$.

The compact induction $\on{ind}_{G'(\bO)}^{G(\bF)}(\pi_{\theta})$ is an irreducible representation $\Pi$ of $G(\bF)=\PGL(2,\bF)$,
and $\pi$ is contained in $\Pi$ as $\Pi^{G'(\bO)(n+1)}$.
Again, the restriction of $\Pi$ to $P(\bF)$ is irreducible; it is induced from the additive character $\psi$.
It is realized in the Schwartz space $\CS(\bF^\times)$. We can view $\omega\in\BC(\bF^\times)$ as an element of
the conjugate dual Schwartz space $\CS'(\bF^\times)$.
Using the invariant unitary structure on $\CS(\bF^\times)$ we embed $\CS(\bF^\times)$ into $\CS'(\bF^\times)$ and
extend the action of $G(\bF)$ on $\CS(\bF^\times)$ to the action on $\CS'(\bF^\times)$.
Then it is known that $\Pi\begin{pmatrix}0&1\\ -1&0\end{pmatrix}\omega\in\CS'(\bF^\times)$
is proportional to $\omega^{-1}$, that is
$\Pi\begin{pmatrix}0&1\\ -1&0\end{pmatrix}\omega=\Gamma(\Pi,\omega;\psi)\omega^{-1}$ for a scalar
$\Gamma(\Pi,\omega;\psi)\in\BC^\times$.

Recall that the representation $\Pi$ corresponds by Langlands to the 2-dimensional
representation $V$ of $\on{Gal}(\bF)$, and $\omega$ corresponds by the local class field theory to the same named character of $\on{Gal}(\bF)$.
Under the Langlands correspondence~\cite[Chapter 8]{BH},
$\Gamma(\Pi,\omega;\psi)=\varepsilon(\frac12,V\otimes\omega^{-1};\psi)$.

\begin{lem}
  \label{GamGam}
  $\Gamma(\pi_{\theta},\bar\omega;\bar\psi_n)=\Gamma(\Pi,\omega;\psi)$.
\end{lem}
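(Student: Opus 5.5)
The idea is to realize $\pi_\theta$ inside $\Pi$ as the invariants $\Pi^{G'(\bO)(n+1)}$, and to make the Kirillov model $\CS(\bF^\times)$ of $\Pi|_{P(\bF)}$ restrict to the model $\BC[\bO^\times_n]$ of $\pi_\theta|_{P(\bO)}$ from \S\ref{setup}. Concretely, I would first identify the subspace of $\CS(\bF^\times)$ of vectors fixed by $P(\bF)\cap G'(\bO)(n+1)$ on which $\bO$ acts through $\bar\psi_n$. With the normalization $\psi|_{\bO}=\bar\psi_n$ (so $\psi$ is trivial on $\varpi^{n+1}\bO$ but not on $\varpi^n\bO$), I expect this subspace to be the functions supported on $\bO^\times$ and invariant under $1+\varpi^{n+1}\bO$. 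That is exactly $\BC[\bO^\times_n]$, extended by zero. The map $\iota\colon\BC[\bO^\times_n]\to\CS(\bF^\times)$ is then $P(\bO)$-equivariant. Since the restriction of $\pi_\theta$ to $P(\bO)$ is irreducible, $\iota$ agrees with the inclusion $\pi_\theta\subset\Pi$ up to a scalar, and a scalar does not affect the eigenvalue-type relation we want.

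Next I would compare the two Weyl elements. The matrix $\begin{pmatrix}0&1\\ -\varpi&0\end{pmatrix}$ equals $\begin{pmatrix}0&1\\ -1&0\end{pmatrix}\cdot\on{diag}(\varpi,1)$ in $\PGL(2)$. In the Kirillov model, $\on{diag}(\varpi,1)$ acts by $f(x)\mapsto f(\varpi x)$. On the distribution $\omega$ this gives $\omega(\varpi)\omega=\omega$, because we chose $\omega(\varpi)=1$. Hence $\Pi\begin{pmatrix}0&1\\ -\varpi&0\end{pmatrix}\omega=\Gamma(\Pi,\omega;\psi)\,\omega^{-1}$.

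It remains to pass from the distribution $\omega$ on $\bF^\times$ to the finite vector $\iota(\bar\omega)=\omega\cdot\mathbf 1_{\bO^\times}$. I would write $\omega=\sum_{k\in\BZ}\Pi(\on{diag}(\varpi^{-k},1))\,\iota(\bar\omega)$, which is a weakly convergent sum in $\CS'(\bF^\times)$. Let $w'=\begin{pmatrix}0&1\\ -\varpi&0\end{pmatrix}$. Conjugating $\on{diag}(\varpi^{-k},1)$ by $w'$ gives $\on{diag}(1,\varpi^{-k})=\on{diag}(\varpi^{k},1)$ modulo the center. Therefore
\[
\Pi(w')\,\omega=\sum_k \Pi(\on{diag}(\varpi^{k},1))\,\Pi(w')\,\iota(\bar\omega).
\]
Now $\Pi(w')\iota(\bar\omega)=\iota(\pi_\theta(w')\bar\omega)=\Gamma(\pi_\theta,\bar\omega;\bar\psi_n)\,\iota(\bar\omega^{-1})$. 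Summing over $k$ gives $\Gamma(\pi_\theta,\bar\omega;\bar\psi_n)\,\omega^{-1}$. Comparing with the previous paragraph yields the lemma.

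The main obstacle is justifying this interchange of $\Pi(w')$ with the infinite sum. The claim that $\Pi(w')$ extends continuously to $\CS'$ rests on unitarity: the extension is defined as the adjoint of $\Pi(w'^{-1})$. So I would check the identity weakly, by pairing both sides with an arbitrary $\phi\in\CS(\bF^\times)$. Then $\Pi(w'^{-1})\phi$ is again a Schwartz function, its support meets only finitely many shells $\varpi^k\bO^\times$, and so only finitely many terms of the sum contribute. A secondary point is the first step, namely verifying that the invariant subspace is exactly the functions supported on $\bO^\times$. This follows because $\Pi(\begin{pmatrix}1&b\\0&1\end{pmatrix})f(x)=\psi(bx)f(x)$, and requiring this to equal $\bar\psi_n(b)f(x)$ for all $b\in\bO$ forces $f(x)=0$ unless $\psi(bx)=\psi(b)$ for all $b\in\bO$, i.e.\ unless $x\in 1+\varpi^{n+1}\bO\subset\bO^\times$.
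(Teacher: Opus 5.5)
Your main computation is the paper's argument, slightly reorganized. The paper writes $w'=\on{diag}(\varpi^{-1},1)\,w$, expands $\omega\chi_z=\sum_k z^kf_k$ over valuation shells, uses that $\Pi(w)$ sends $f_k$ to $\Gamma(\pi_\theta,\bar\omega;\bar\psi_n)f^-_{-k-1}$, and sets $z=1$. You instead write $w'=w\,\on{diag}(\varpi,1)$ and use $\omega(\varpi)=1$ to get $\Pi(w')\omega=\Gamma(\Pi,\omega;\psi)\omega^{-1}$ directly, then sum the shells under $w'$. Your weak-pairing justification of the interchange is correct; the paper covers that step only by the words ``Fourier expansion''.

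The problem is your first step. You need $\Pi(w')\iota(\bar\omega)=\iota(\pi_\theta(w')\bar\omega)$, i.e.\ that the inclusion $\pi_\theta\subset\Pi\cong\CS(\bF^\times)$ is a multiple of $\iota$. Your justification of this fails in two ways.
\begin{enumerate}
\item $\pi_\theta$ is not the subspace on which $\bO$ acts through $\bar\psi_n$. On $\ind_{\bO_n}^{P(\bO)_n}(\bar\psi_n)=\BC[\bO^\times_n]$ the element $\begin{pmatrix}1&b\\0&1\end{pmatrix}$ multiplies by the function $\bar\psi_n(bx)$, not by the scalar $\bar\psi_n(b)$. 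Your own computation shows that this eigen-condition cuts out functions supported on $1+\varpi^{n+1}\bO$, which is a single line. That contradicts the conclusion $\BC[\bO^\times_n]$ you draw from it.
\item Irreducibility of $\pi_\theta|_{P(\bO)}$ alone does not make a $P(\bO)$-map $\BC[\bO^\times_n]\to\CS(\bF^\times)$ unique up to scalar. Schur's lemma needs $\sigma_n$ to occur with multiplicity one in $\CS(\bF^\times)|_{P(\bO)}$.
\end{enumerate}

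Here is a repair. Since $\pi_\theta|_{P(\bO)}$ factors through $P(\bO)_n$, its image lies in the $P(\bO)(n+1)$-invariants of $\CS(\bF^\times)$. Invariance under $\begin{pmatrix}1&\varpi^{n+1}\bO\\0&1\end{pmatrix}$ forces support in $\bO\setminus\{0\}$, because $\psi$ is trivial on $\varpi^{n+1}\bO$ but not on $\varpi^{n}\bO$. Invariance under $1+\varpi^{n+1}\bO$ then leaves $\bigoplus_{k\ge0}\BC[\varpi^k\bO^\times/(1+\varpi^{n+1}\bO)]$. On each summand with $k\ge1$, the subgroup $\begin{pmatrix}1&\varpi^{n}\bO\\0&1\end{pmatrix}$ acts trivially, while it acts nontrivially on $\sigma_n$. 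By irreducibility, the projections of the image to these summands therefore vanish. So the image is the $k=0$ summand $\iota(\BC[\bO^\times_n])\cong\sigma_n$, and Schur now gives the scalar. With this multiplicity-one argument added, your proof is complete. The paper uses the same identification tacitly when it writes $\Pi(w')f_0=\Gamma(\pi_\theta,\bar\omega;\bar\psi_n)f_0^-$.
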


\begin{proof}
For $k\in\BZ$, set $f_k(x)=\omega(x)\delta_{v(x),k}$ (Kronecker delta function of the valuation of $x\in\bF^\times$),
and $f_k^-(x)=\omega(x)^{-1}\delta_{v(x),k}$
(functions in $\CS(\bF^\times)$). Then $\Pi\begin{pmatrix}0&1\\ -\varpi&0\end{pmatrix}f_0=\Gamma(\pi_{\theta},\bar\omega;\bar\psi_n)f_0^-$.
Since $\begin{pmatrix}0&1\\ -\varpi&0\end{pmatrix}=\begin{pmatrix}\varpi^{-1}&0\\ 0&1\end{pmatrix}\begin{pmatrix}0&1\\ -1&0\end{pmatrix}$,
we get $\Pi\begin{pmatrix}0&1\\ -1&0\end{pmatrix}f_0(x)=\Gamma(\pi_{\theta},\bar\omega;\bar\psi_n)f_0^-(\varpi x)$, and
$\Pi\begin{pmatrix}0&1\\ -1&0\end{pmatrix}f_k(x)=\Gamma(\pi_{\theta},\bar\omega;\bar\psi_n)f_{-k-1}^-(x)$.
Given $z\in S^1$, we set $\chi_z(x)=z^{v(x)}$. By Fourier expansion, $\omega\chi_z=\sum_{k\in\BZ}z^kf_k$.
Hence $\Gamma(\Pi,\omega\chi_z;\psi)=z^{-1}\Gamma(\pi_{\theta},\bar\omega;\bar\psi_n)$. It remains to take $z=1$.
\end{proof}

\section{2-dimensional case}

\subsection{Representations of $P(\CO)$}
\label{Rep PO}
Recall that $\bF$ is a local field of odd residue characteristic, and $\psi\colon\bF\to\BC^\times$ is a nontrivial additive character.
We set $\CK=\bF(\!(t)\!)\supset\CO=\bF[\![t]\!]$. For $k\in\BZ$ we consider the additive character $\psi_k\colon\CK\to\BC^\times$ equal
to the composition of $\psi$ with taking the $k$-th coefficient of a Laurent series.

We follow the notation of~\cite{BK}. In particular, we set $G(\CO)=\PGL(2,\CO)$, and for $n\in\BN$, $G(\CO)(n)\subset G(\CO)$ is the
$n$-th congruence subgroup. We set $G(\CO)_n=G(\CO)/G(\CO)(n+1)$. Furthermore, $P(\CO)\subset G(\CO)$ is the upper triangular mirabolic
subgroup, and $P(\CO)(n)=P(\CO)\cap G(\CO)(n)$, $P(\CO)_n=P(\CO)/P(\CO)(n+1)$. We have embeddings
$\CO=\left\{\begin{pmatrix}1&z\\ 0&1\end{pmatrix},\ z\in\CO\right\}\subset P(\CO)$ and
$\CO^\times=\left\{\begin{pmatrix}x&0\\ 0&1\end{pmatrix},\ x\in\CO^\times\right\}\subset P(\CO)$, so that $P(\CO)=\CO^\times\ltimes\CO$,
and $P(\CO)_n=\CO^\times_n\ltimes\CO_n$.

Recall from~\cite[Definition 4.1.(4)]{BK} the irreducible representation $\sigma_n=\ind_{\CO_n}^{P(\CO)_n}(\psi_n)$ of $P(\CO)_n$
(compact induction) in the Schwartz space $\CS(\CO^\times_n)$. Let
%\begin{equation}
%  \label{Psi}
 $ \Psi\colon\CO_n\to\BC^\times$
%\end{equation}
be an additive character such that
$\Psi|_{\CO_n(n)}\not\equiv1$. Then there is $c\in\CO^\times_n$ such that $\Psi(y)=\psi_n(cy)$ for any $y\in\CO_n$.
The representation $\ind_{\CO_n}^{P(\CO)_n}(\Psi)$ is realized in the space of functions on $P(\CO)_n$ with compact support modulo $\CO_n$
such that $\varphi\left(\begin{pmatrix}1&y\\ 0&1\end{pmatrix}\begin{pmatrix}x&z\\ 0&1\end{pmatrix}\right)
=\Psi(y)\varphi\begin{pmatrix}x&z\\ 0&1\end{pmatrix}$.

\begin{lem}
  \label{trivial}
  An intertwining operator $S\colon\sigma_n=\ind_{\CO_n}^{P(\CO)_n}(\psi_n)\to\ind_{\CO_n}^{P(\CO)_n}(\Psi)$ is given
  by $S\varphi\begin{pmatrix}x&z\\ 0&1\end{pmatrix}
  =\varphi\left(\begin{pmatrix}c&0\\ 0&1\end{pmatrix}\begin{pmatrix}x&z\\ 0&1\end{pmatrix}\right).$ \hfill $\Box$
\end{lem}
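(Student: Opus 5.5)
Two things have to be checked: that $S\varphi$ lies in $\ind_{\CO_n}^{P(\CO)_n}(\Psi)$, i.e.\ transforms on the left under $\CO_n$ by $\Psi$, and that $S$ commutes with the $P(\CO)_n$-action. Both are direct computations in $P(\CO)_n=\CO^\times_n\ltimes\CO_n$. Set $h_c=\begin{pmatrix}c&0\\ 0&1\end{pmatrix}$, so that $S\varphi(g)=\varphi(h_cg)$ for all $g\in P(\CO)_n$.

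For the transformation law, the key identity is the conjugation formula
\[
h_c\begin{pmatrix}1&y\\ 0&1\end{pmatrix}h_c^{-1}=\begin{pmatrix}1&cy\\ 0&1\end{pmatrix}.
\]
Using it and the defining property of $\varphi\in\sigma_n$, we get
\[
S\varphi\left(\begin{pmatrix}1&y\\ 0&1\end{pmatrix}g\right)
=\varphi\left(\begin{pmatrix}1&cy\\ 0&1\end{pmatrix}h_cg\right)
=\psi_n(cy)\,\varphi(h_cg)=\Psi(y)\,S\varphi(g).
\]
So $S\varphi$ has the required equivariance. The support condition (compact support modulo $\CO_n$) is automatic, since $P(\CO)_n$ is a finite-type quotient and the support is only translated by $h_c$.

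The group $P(\CO)_n$ acts on both induced spaces by right translation, $(\rho(p)\varphi)(g)=\varphi(gp)$. Since $S$ is left translation by $h_c$, it commutes with right translations:
\[
S(\rho(p)\varphi)(g)=\varphi(h_cgp)=(\rho(p)S\varphi)(g).
\]
Hence $S$ intertwines the two actions.

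Finally, $S$ is bijective, with inverse given by left translation by $h_c^{-1}$. By the same computation with $c^{-1}$, that inverse maps $\ind_{\CO_n}^{P(\CO)_n}(\Psi)$ back into $\sigma_n$. Thus $S$ is an isomorphism, and by irreducibility of $\sigma_n$ it is the intertwiner, unique up to scalar by Schur's lemma. There is no real obstacle here. The only point needing care is that the conjugation $h_c\begin{pmatrix}1&y\\0&1\end{pmatrix}h_c^{-1}$ scales $y$ by $c$ and not by $c^{-1}$, which is what makes $\psi_n(c\,\cdot)=\Psi$ come out correctly.
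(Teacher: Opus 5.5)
Your proof is correct and is exactly the routine verification the paper leaves to the reader: it states the lemma with $\Box$ and no argument, and the substance is the conjugation $h_c\begin{pmatrix}1&y\\ 0&1\end{pmatrix}h_c^{-1}=\begin{pmatrix}1&cy\\ 0&1\end{pmatrix}$, which you use. One phrase is loose: $P(\CO)_n$ is not ``finite-type'' (it is only locally compact, since $\CO_n\cong\bF^{n+1}$), and the support condition really follows from normality of $\CO_n$ in $P(\CO)_n$, which gives $h_c^{-1}\CO_nK=\CO_nh_c^{-1}K$ for compact $K$.
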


We will need the following subgroups of $P(\CO)_n$. If $n=2m-1$ is odd, we have the abelian subgroup $P(\CO)_{2m-1}(m)\subset P(\CO)_{2m-1}$
(the image of $P(\CO)(m)$ in $P(\CO)_{2m-1}$). If $n=2m$ is even, we have the abelian subgroup
$P'(\CO)_{2m}(m)=\left\{\begin{pmatrix}\CO_{2m}^\times(m+1)&\CO_{2m}(m)\\ 0&1\end{pmatrix}\right\}$. In the odd case, let us assume
that an additive character $\Psi\colon\CO_{2m-1}\to\BC^\times$ factors through the projection $\CO_{2m-1}\to\CO_{2m-1}(m)$,
$b_0+b_1t+\ldots+b_{2m-1}t^{2m-1}\mapsto b_mt^m+\ldots+b_{2m-1}t^{2m-1}$. In the even case, let us assume that $\Psi\colon\CO_{2m}\to\BC^\times$
factors through the projection $\CO_{2m}\to\CO_{2m}(m)$, $b_0+b_1t+\ldots+b_{2m}t^{2m}\mapsto b_mt^m+\ldots+b_{2m}t^{2m}$.
We also consider the character $\Xi\colon P(\CO)_{2m-1}(m)\to\BC^\times$ (resp.\ $\Xi\colon P'(\CO)_{2m}(m)\to\BC^\times$) defined by
$\Xi\begin{pmatrix}x&z\\ 0&1\end{pmatrix}=\Psi(z)$.

\begin{lem}
  \label{intertwiners}
  \textup{(1)} An isomorphism $T\colon\ind_{\CO_{2m-1}}^{P(\CO)_{2m-1}}(\Psi)\iso\ind_{P(\CO)_{2m-1}(m)}^{P(\CO)_{2m-1}}(\Xi)$ is given by
  $Tf\begin{pmatrix}x&z\\ 0&1\end{pmatrix}
  =\int_{ a\in\CO_{m-1}}f\left(\begin{pmatrix}1+t^m a&0\\ 0&1\end{pmatrix}\begin{pmatrix}x&z\\ 0&1\end{pmatrix}\right)d a.$

  \textup{(2)} The inverse isomorphism is given by\\
  $T^{-1}f\begin{pmatrix}x&z\\ 0&1\end{pmatrix}
  =\int_{ b\in\CO_{m-1}}f\left(\begin{pmatrix}1& b\\ 0&1\end{pmatrix}\begin{pmatrix}x&z\\ 0&1\end{pmatrix}\right)d b.$

  \textup{(3)} An isomorphism $T\colon\ind_{\CO_{2m}}^{P(\CO)_{2m}}(\Psi)\iso\ind_{P'(\CO)_{2m}(m)}^{P(\CO)_{2m}}(\Xi)$ is given by
  $Tf\begin{pmatrix}x&z\\ 0&1\end{pmatrix}
  =\int_{ a\in\CO_{m-1}}f\left(\begin{pmatrix}1+t^{m+1} a&0\\ 0&1\end{pmatrix}\begin{pmatrix}x&z\\ 0&1\end{pmatrix}\right)d a.$

  \textup{(4)} The inverse isomorphism is given by\\
  $T^{-1}f\begin{pmatrix}x&z\\ 0&1\end{pmatrix}
  =\int_{ b\in\CO_{m-1}}f\left(\begin{pmatrix}1& b\\ 0&1\end{pmatrix}\begin{pmatrix}x&z\\ 0&1\end{pmatrix}\right)d b.$ 

Here $\int_{\CO_{m-1}}$ is taken with respect to a self-dual Haar measure on $\CO_{m-1}=\bF^m$ with respect to the pairing
$\langle a, b\rangle=\Psi(t^m\cdot a\cdot b)=\psi_n(c\cdot t^m\cdot a\cdot b)$ in case $n=2m-1$
(resp.\ $\langle a, b\rangle=\Psi(t^{m+1}\cdot a\cdot b)=\psi_n(c\cdot t^{m+1}\cdot a\cdot b)$ in case $n=2m$)
(recall that $c$ was introduced right before~Lemma~\ref{trivial}).   \hfill $\Box$
\end{lem}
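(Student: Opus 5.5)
The plan is to check three things directly from the formulas: that $T$ and the proposed $T^{-1}$ land in the right induced spaces, that they commute with the $P(\CO)_n$-action, and that they are mutually inverse by Fourier inversion on $\CO_{m-1}$. I treat the odd case $n=2m-1$ (parts (1),(2)) in detail; the even case $n=2m$ (parts (3),(4)) is the same argument with $t^m$ replaced by $t^{m+1}$ in the torus direction.

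\emph{Equivariance.} Both $T$ and $T^{-1}$ integrate $f$ over left translates. The group $P(\CO)_n$ acts on both induced models by right translation, so both operators commute with this action. It remains to check the left transformation laws.
\begin{itemize}
\item In $T$, the map $a\mapsto 1+t^m a$ identifies the additive group $\CO_{m-1}$ with $\CO^\times_{2m-1}(m)=(1+t^m\CO)/(1+t^{2m}\CO)$. This holds because $(1+t^ma)(1+t^ma')\equiv 1+t^m(a+a')$ modulo $t^{2m}$. Hence left translation of $Tf$ by a diagonal element of $P(\CO)_{2m-1}(m)$ is absorbed by translation invariance of $da$, which matches $\Xi$ being trivial on the diagonal part.
\item For a unipotent element $\begin{pmatrix}1&z'\\0&1\end{pmatrix}$ with $z'\in t^m\CO$, write $u=1+t^ma$. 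Then
\[
\begin{pmatrix}u&0\\0&1\end{pmatrix}\begin{pmatrix}1&z'\\0&1\end{pmatrix}=\begin{pmatrix}1&uz'\\0&1\end{pmatrix}\begin{pmatrix}u&0\\0&1\end{pmatrix},
\]
and $uz'\equiv z'$ modulo $t^{2m}=0$ in $\CO_{2m-1}$. So $f$ contributes the factor $\Psi(z')=\Xi$, as required.
\end{itemize}
For $T^{-1}$, left $\CO_n$-translation by $y$ is handled the same way. One shifts $b$ by the low-degree part of $y$, which leaves the integral unchanged by invariance of $db$. The high-degree part of $y$ produces the factor $\Psi(y)$ via $\Xi$, and $\Psi$ kills the low-degree part of $y$.

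\emph{Inverse property.} Compute $T^{-1}Tf$ as a double integral over $a,b\in\CO_{m-1}$ of
\[
f\left(\begin{pmatrix}1+t^ma&0\\0&1\end{pmatrix}\begin{pmatrix}1&b\\0&1\end{pmatrix}p\right)=\Psi\bigl((1+t^ma)b\bigr)\,f\left(\begin{pmatrix}1+t^ma&0\\0&1\end{pmatrix}p\right).
\]
Since $b$ has degree $<m$ and $\Psi$ factors through the projection to coefficients of degree $\ge m$, we get $\Psi(b)=1$. Thus $\Psi((1+t^ma)b)=\Psi(t^mab)=\langle a,b\rangle$. The $b$-integral against the self-dual measure then yields the delta function at $a=0$, giving $T^{-1}Tf=f$. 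The composition $TT^{-1}$ is computed symmetrically: one commutes the unipotent element past the diagonal one and uses the $\Xi$-law instead. Alternatively, one notes that both spaces have the same dimension $|P(\CO)_n|/|\CO_n|=|\CO_n^\times|$. This forces $T$ to be an isomorphism once $T^{-1}T=\mathrm{id}$.

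\emph{Even case.} Now $a\mapsto 1+t^{m+1}a$ identifies $\CO_{m-1}$ with $\CO^\times_{2m}(m+1)$, using $(t^{m+1})^2=0$ in $\CO_{2m}$. Unipotent elements of $P'(\CO)_{2m}(m)$ have $z'\in t^m\CO$, and $t^{m+1}az'\in t^{2m+1}\CO=0$. The pairing is $\Psi(t^{m+1}ab)$, and $\Psi(b)=1$ again because $\deg b<m$. The main point to watch throughout is bookkeeping of degrees: all the cross terms ($t^{2m}$, resp.\ $t^{2m+1}$) must vanish, and $\Psi$ must be trivial on degrees $<m$. These are exactly guaranteed by the factorization hypotheses on $\Psi$ stated before the lemma.
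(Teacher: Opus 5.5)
Your argument is correct and is the direct verification that the paper omits (the lemma is stated with $\Box$ and no proof). The transformation laws follow from the degree bookkeeping you describe, $T^{-1}T=\mathrm{id}$ follows from $\Psi\bigl((1+t^ma)b\bigr)=\langle a,b\rangle$ and Fourier inversion for the self-dual measure, and $TT^{-1}=\mathrm{id}$ follows from the identity $\begin{pmatrix}1&b\\ 0&1\end{pmatrix}\begin{pmatrix}1+t^ma&0\\ 0&1\end{pmatrix}=\begin{pmatrix}1+t^ma&-t^mab\\ 0&1\end{pmatrix}\begin{pmatrix}1&b\\ 0&1\end{pmatrix}$ together with $\int_a\langle a,b\rangle^{-1}da=\delta_0(b)$ (use $t^{m+1}$ in the torus direction in the even case).

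Drop the ``alternative'' dimension count. Since $\bF$ is a nonarchimedean local field, $\CO_n^\times$ is an infinite locally compact group, so both induced spaces are infinite-dimensional Schwartz spaces. There $T^{-1}T=\mathrm{id}$ alone does not give bijectivity, and you need the direct computation of $TT^{-1}$.
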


Finally, we have $\ind_{P'(\CO)_{2m}(m)}^{P(\CO)_{2m}}(\Xi)\cong\ind_{P(\CO)_{2m}(m)}^{P(\CO)_{2m}}(\ind_{P'(\CO)_{2m}(m)}^{P(\CO)_{2m}(m)}(\Xi))$.
Here $\ind_{P'(\CO)_{2m}(m)}^{P(\CO)_{2m}(m)}(\Xi)=:\CW_\Xi\cong\CS(\bF)$ is realized in the space of locally constant functions $f$ on
$P(\CO)_{2m}(m)$ compactly supported modulo $P'(\CO)_{2m}(m)$ such that $f\left(\begin{pmatrix}a&b\\ 0&1\end{pmatrix}
\begin{pmatrix}x&z\\ 0&1\end{pmatrix}\right)=\Xi\begin{pmatrix}a&b\\ 0&1\end{pmatrix}f\begin{pmatrix}x&z\\ 0&1\end{pmatrix}$
for any $\begin{pmatrix}a&b\\ 0&1\end{pmatrix}\in P'(\CO)_{2m}(m)$. The identification of $\CW_\Xi$ with $\CS(\bF)$ is given by
restriction of functions to the line $\left\{\begin{pmatrix}1+t^m\xi&0\\ 0&1\end{pmatrix},\ \xi\in\bF\right\}$.    

\begin{lem}
  \label{banal}
\textup{(1)}  An isomorphism $S\colon\ind_{P'(\CO)_{2m}(m)}^{P(\CO)_{2m}}(\Xi)\iso\ind_{P(\CO)_{2m}(m)}^{P(\CO)_{2m}}(\CW_\Xi)$ is given by
$S\varphi\begin{pmatrix}x&z\\ 0&1\end{pmatrix}
=\varphi\left(\begin{pmatrix}1+t^m\xi&0\\ 0&1\end{pmatrix}\begin{pmatrix}x&z\\ 0&1\end{pmatrix}\right)$ (i.e.\ the value of
$S\varphi$ at $\begin{pmatrix}x&z\\ 0&1\end{pmatrix}$ is the function of $\xi\in\bF$ in the RHS).

\textup{(2)} The inverse isomorphism $S^{-1}\colon\ind_{P(\CO)_{2m}(m)}^{P(\CO)_{2m}}(\CW_\Xi)\iso\ind_{P'(\CO)_{2m}(m)}^{P(\CO)_{2m}}(\Xi)$ is given by
$S^{-1}\Phi\begin{pmatrix}x&z\\ 0&1\end{pmatrix}=\Phi\begin{pmatrix}x&z\\ 0&1\end{pmatrix}(0)$ (in the RHS we evaluate the
$\CS(\bF)$-valued function $\Phi$ of $\begin{pmatrix}x&z\\ 0&1\end{pmatrix}$ at $0\in\bF$). 
\hfill $\Box$
\end{lem}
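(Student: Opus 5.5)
This is transitivity of induction made explicit, and I would verify it by direct computation from the definitions. Write $Q=P(\CO)_{2m}(m)$ and $Q'=P'(\CO)_{2m}(m)$, and set $u(\xi)=\begin{pmatrix}1+t^m\xi&0\\ 0&1\end{pmatrix}$. The first step is to check that $\xi\mapsto u(\xi)$ gives a set of coset representatives for $Q'\backslash Q$. Indeed, $Q/Q'\cong\CO^\times_{2m}(m)/\CO^\times_{2m}(m+1)\cong\bF$ via $1+t^m\xi+\ldots\mapsto\xi$. This justifies the identification $\CW_\Xi\cong\CS(\bF)$ by restriction to the line.

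For (1), fix $\varphi\in\ind_{Q'}^{P(\CO)_{2m}}(\Xi)$ and $g\in P(\CO)_{2m}$. I would verify two things.

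\begin{enumerate}
\item The function $h\mapsto\varphi(hg)$ on $Q$ lies in $\CW_\Xi$. This is immediate from left $\Xi$-equivariance of $\varphi$ under $Q'$.
\item For $q\in Q$, $(S\varphi)(qg)=\CW_\Xi(q)\bigl((S\varphi)(g)\bigr)$. Here $\CW_\Xi(q)$ acts by right translation, $(\CW_\Xi(q)f)(h)=f(hq)$, so this follows by associativity: $\varphi(h\cdot qg)=\varphi((hq)g)$.
\end{enumerate}

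Next I would rewrite both statements in the $\CS(\bF)$-realization. This only requires expressing $f(hq)$ for $h=u(\xi)$ through values on the line, and that is tautological once one knows the functions on $Q$ are determined by their restrictions to representatives. Commutation of $S$ with right translation by $P(\CO)_{2m}$ is obvious. Support and local constancy are automatic, since all groups are quotients by $P(\CO)(2m+1)$ and the relevant sets are compact modulo the subgroups.

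For (2), I would show that $\Phi\mapsto\Phi(\cdot)(0)$ is inverse to $S$. One composition is immediate: $(S^{-1}S\varphi)(g)=\varphi(u(0)g)=\varphi(g)$. For the other, I would use the equivariance $\Phi(u(\xi)g)=\CW_\Xi(u(\xi))\Phi(g)$. Evaluating at $\xi'=0$ gives $\Phi(g)(\xi)$, provided that in the $\CS(\bF)$ picture translation by $u(\xi)$ acts as $f\mapsto f(\cdot+\xi)$ up to $\Xi$-factors. At $0$ these factors reduce to $\Xi(\text{trivial})=1$, because $u(0)u(\xi)=u(\xi)$.

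The only point I expect to need care is checking that $S^{-1}\Phi$ is $\Xi$-equivariant under $Q'$. Write $p\in Q'$ as $p=\begin{pmatrix}a&b\\ 0&1\end{pmatrix}$. Then
\[
\Phi(pg)(0)=(\CW_\Xi(p)\Phi(g))(0)=\Phi(g)(p)=\Xi(p)\Phi(g)(0),
\]
where the middle step regards $\Phi(g)$ as a function on $Q$, and the last step uses the defining equivariance of elements of $\CW_\Xi$. This completes both parts, with no genuine obstacle beyond bookkeeping of left/right conventions.
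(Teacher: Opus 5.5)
Your proposal is correct and is the verification the paper leaves implicit: the lemma is stated without proof as the explicit form of transitivity of induction $\ind_{P'(\CO)_{2m}(m)}^{P(\CO)_{2m}}(\Xi)\cong\ind_{P(\CO)_{2m}(m)}^{P(\CO)_{2m}}(\CW_\Xi)$, and your checks are what is needed, namely equivariance of $S\varphi$, the two compositions, and $\Phi(pg)(0)=\Phi(g)(p)=\Xi(p)\Phi(g)(0)$ for $p\in Q'$. The only loose phrase is ``compact modulo the subgroups'': $P(\CO)_{2m}$ itself is not compact modulo $Q'$, but it is routine to check that $S$ and $S^{-1}$ preserve smoothness and compact support modulo $Q'$ (resp.\ $Q$).
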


\subsection{Representations of $G(\CO)$, odd depth}
\label{odd depth}
Let $\CE$ be an unramified
quadratic extension of $\CK$, namely $\CE=\bE(\!(t)\!)$, where $\bE=\bF(\sqrt{\epsilon})$ is a quadratic extension of $\bF$,
and $\epsilon\in\bF^\times\setminus(\bF^\times)^2$. We fix an embedding
$T^\CE=\CE^\times/\CK^\times\hookrightarrow\PGL(2,\CK)=G(\CK)$ with the image consisting of the classes of the
elements $\begin{pmatrix}a&b\\ b\epsilon&a\end{pmatrix}$. Note that the image actually lies in the subgroup
$G(\CO)\subset G(\CK)$. 
We have the $\on{Ad}_{T^\CE}$-invariant projection $p\colon\fg(\CK)\to\ft^\CE$,
$\begin{pmatrix}a&b\\ c&-a\end{pmatrix}\mapsto\begin{pmatrix}0&\frac{c\epsilon^{-1}+b}{2}\\
\frac{b\epsilon+c}{2}&0\end{pmatrix}$.
Under the natural identifications, $p$ induces the same named map from $G(\CO)_{2m-1}(m)$ to $T^\CE_{2m-1}(m)$, and
from $G(\CO)_{2m}(m)$ to $T^\CE_{2m}(m)$, and the former map is a homomorphism, while the latter is not.

Suppose first that $n=2m-1$ is odd. Given a depth $2m-1$ character $\theta\colon T^\CE_{2m-1}\to\BC^\times$,
we consider the character $\theta\circ p$ of the normal abelian subgroup $G(\CO)_{2m-1}(m)$, 
and its stabilizer in $G(\CO)_{2m-1}$ is $T^\CE_{2m-1}\cdot G(\CO)_{2m-1}(m)$.
We assume that the action of the Galois group of $\CE:\CK$ does not fix $\theta$, that is $\bar\theta\ne\theta$. Then the induced representation
$\pi_\theta=\ind_{\on{Stab}(\theta\circ p)}^{G(\CO)_{2m-1}}(\theta)$ is irreducible and special (see~\cite[\S5.5]{BK}):
its restriction to $P(\CO)_n$ remains irreducible and is naturally isomorphic to $\ind_{P(\CO)_{2m-1}(m)}^{P(\CO)_{2m-1}}(\varXi)$.
Hence this restriction is isomorphic to the representation $\sigma_{2m-1}=\ind_{\CO_{2m-1}}^{P(\CO)_{2m-1}}(\psi_{2m-1})$,
see~Lemmas~\ref{trivial},\ref{intertwiners}. Here the character $\varXi$ is defined as follows.

The abelian subgroup $P(\CO)_{2m-1}(m)\subset P(\CO)_{2m-1}$
has the character (also denoted by $\theta\circ p$) given by the following composition:
\begin{equation}
  \label{theta eps}
  P(\CO)_{2m-1}(m)\hookrightarrow G(\CO)_{2m-1}(m)\xrightarrow{p}T^{\CE}_{2m-1}(m)\xrightarrow{\theta}\BC^\times.
  \end{equation}
From the explicit formula for $p$ it is clear that $\theta\circ p$ factors through the projection to the strictly upper triangular
matrices $\CO_{2m-1}(m)\subset P(\CO)_{2m-1}(m)$: 
\begin{equation}
  \label{Psi eps}
  P(\CO)_{2m-1}(m)\twoheadrightarrow\CO_{2m-1}(m)\xrightarrow{\varXi}\BC^\times,
  \end{equation}
where $\varXi$ is defined as follows. For an element $ b=b_mt^m+\ldots+b_{2m-1}t^{2m-1}\in\CO_{2m-1}(m)$,
we have $\varXi( b)=\theta(1+\frac{\sqrt{\epsilon}}{2} b)$. 
By an abuse of notation, we will denote the composition~\eqref{Psi eps} by $\varXi$ as well.
      
We can also consider the strictly upper triangular subgroup $\CO_{2m-1}\subset P(\CO)_{2m-1}$, and its character $\varPsi$ given by the formula
\begin{equation}
  \label{Xi eps}
  \varPsi(b_0+b_1t+\ldots+b_mt^m+\ldots b_{2m-1}t^{2m-1})=\theta(1+\frac{\sqrt{\epsilon}}{2}(b_mt^m+\ldots+b_{2m-1}t^{2m-1}))
\end{equation}
(so that the characters~\eqref{Psi eps} and~\eqref{Xi eps} coincide on the intersection of $P(\CO)_{2m-1}(m)$ and $\CO_{2m-1}$).
In other words, for $ a\in\CO_{m-1}$,
\begin{equation}
  \label{ctheta eps}
  \varPsi(t^m a)=\theta(1+\frac{\sqrt{\epsilon}t^m}{2} a)=\psi_{m-1}(c_\theta\cdot a),
\end{equation}
where $\psi_{m-1}(b_0+b_1t+\ldots+b_{m-1}t^{m-1})=\psi(b_{m-1})$, and $c_\theta\in\CO_{m-1}^\times$.

\subsection{Representations of $G(\CO)$, even depth}
\label{even depth}
Now $n=2m>0$ is even. A depth $2m$ character $\theta\colon T^\CE_{2m}\to\BC^\times$ gives rise to an irreducible representation of a
certain subgroup $H\subset G(\CO)_{2m}$. Namely,
we consider the character $\theta\circ p$ of the normal abelian subgroup $G(\CO)_{2m}(m+1)\subset G(\CO)_{2m}$, its stabilizer
$\on{Stab}_\theta:=\on{Stab}_{G(\CO)_{2m}}(\theta\circ p)=T^\CE_{2m}\cdot G(\CO)_{2m}(m)$, and its quotient
$\ol{\on{Stab}}_\theta:=\on{Stab}_\theta/\on{Ker}(\theta\circ p)$ (a {\em quasi-Heisenberg} group, see~\cite[\S3.7]{BK}).
We assume again that the action of the Galois group of $\CE:\CK$ does not fix $\theta$, that is $\bar\theta\ne\theta$.
Then the induced representation
$\pi_\theta=\ind_{\on{Stab}_\theta}^{G(\CO)_{2m}}(W_\theta)$ of an irreducible representation of $\on{Stab}_\theta$ factoring through an irreducible
representation of $\ol{\on{Stab}}_\theta$, is irreducible and special (see~\cite[\S5.6]{BK}): its restriction to $P(\CO)_n$
remains irreducible and is naturally
isomorphic to $\ind_{P'(\CO)_{2m}(m)}^{P(\CO)_{2m}}(\varXi)$.  
Hence this restriction is isomorphic to the representation $\sigma_{2m}=\ind_{\CO_{2m}}^{P(\CO)_{2m}}(\psi_{2m})$,
see~Lemmas~\ref{trivial},\ref{intertwiners}. Here the character $\varXi$ is defined as follows.

Recall the abelian subgroup
$P'(\CO)_{2m}(m)=\left\{\begin{pmatrix}\CO_{2m}^\times(m+1)&\CO_{2m}(m)\\ 0&1\end{pmatrix}\right\}$. Note that the restriction of
$p\colon G(\CO)_{2m}(m)\to T^\CE_{2m}(m)$ to $P'(\CO)_{2m}(m)\subset G(\CO)_{2m}(m)$ is a homomorphism. So $P'(\CO)_{2m}(m)$ 
has the character given by the composition 
\begin{equation}
  \label{theta even eps}
  P'(\CO)_{2m}(m)\hookrightarrow G(\CO)_{2m}(m)\xrightarrow{p}T^{\CE}_{2m}(m)\xrightarrow{\theta}\BC^\times.
  \end{equation}
Again $\theta\circ p$ factors through the projection to the strictly upper triangular
matrices $\CO_{2m}(m)\subset P'(\CO)_{2m}(m)$: 
\begin{equation}
  \label{Psi even eps}
  P'(\CO)_{2m}(m)\twoheadrightarrow\CO_{2m}(m)\xrightarrow{\varXi}\BC^\times,
  \end{equation}
where $\varXi$ is defined as follows. For an element $ b=b_mt^m+\ldots+b_{2m}t^{2m}\in\CO_{2m}(m)$, we have
$\varXi( b)=\theta(1+\frac{\sqrt{\epsilon}}{2} b)$.
By an abuse of notation, we will denote the composition~\eqref{Psi even eps} by $\varXi$ as well.

We can also consider the strictly upper triangular subgroup $\CO_{2m}\subset P(\CO)_{2m}$, and its character $\varPsi$ given by the formula
\begin{equation}
  \label{Xi even eps}
  \varPsi(b_0+b_1t+\ldots+b_mt^m+\ldots+b_{2m}t^{2m})=\theta(1+\frac{\sqrt{\epsilon}}{2}(b_mt^m+\ldots+b_{2m}t^{2m}))
\end{equation}
(so that the characters~\eqref{Psi even eps} and~\eqref{Xi even eps} coincide on the intersection of $P'(\CO)_{2m}(m)$ and $\CO_{2m}$).
Similarly to~\eqref{ctheta eps}, we have $\varPsi(t^{m+1} a)=\theta(1+\frac{\sqrt{\epsilon}t^{m+1}}{2} a)=\psi_{m-1}(c_\theta\cdot a)$.

Finally, recall from~\cite[the paragraph above Proposition 5.31]{BK} that the construction of the quasi-Heisenberg representation $\CW_\theta$ depends on
a choice of lift $s\colon T^\CE_{2m}\twoheadrightarrow T^\CE_0=T^\bE\to\on{Mp}(2,\bF)$ to the metaplectic cover, of the homomorphism
$T^\bE\to\Sp(2,\bF)$ arising from the natural action of $T^\bE$ on $\on{Ker}(p\colon \fg(\CO)_m(m)\to\ft^\CE_m(m))$.

\begin{lem}
  \label{TE}
  In the basis $\left\{\begin{pmatrix}\frac{t^m}{2}&0\\ 0&\frac{-t^m}{2}\end{pmatrix},\ \begin{pmatrix}0&\frac{t^m}{2}\\ \frac{-\epsilon t^m}{2}&0\end{pmatrix}\right\}$ of
  $\on{Ker}(p\colon \fg(\CO)_m(m)\to\ft^\CE_m(m))$, the adjoint action of an element $\begin{pmatrix}\alpha&\beta\\ \beta\epsilon&\alpha\end{pmatrix}\in T^\bE$
  is given by the matrix $\begin{pmatrix}\frac{\alpha^2+\beta^2\epsilon}{\alpha^2-\beta^2\epsilon}&\frac{-2\alpha\beta\epsilon}{\alpha^2-\beta^2\epsilon}\\ \frac{-2\alpha\beta}{\alpha^2-\beta^2\epsilon}&
  \frac{\alpha^2+\beta^2\epsilon}{\alpha^2-\beta^2\epsilon}\end{pmatrix}\in\Sp(2,\bF)$. \hfill $\Box$
\end{lem}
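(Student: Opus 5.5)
The proof is a direct matrix computation. Since the adjoint action of $T^\bE\subset G(\CO)$ on $\fg(\CO)_m(m)$ is $\bF$-linear and commutes with multiplication by the central scalar $t^m$, we may drop the factor $t^m$. We work with $X_1=\frac12\begin{pmatrix}1&0\\0&-1\end{pmatrix}$ and $X_2=\frac12\begin{pmatrix}0&1\\-\epsilon&0\end{pmatrix}$. The action of the class of $g=\begin{pmatrix}\alpha&\beta\\ \beta\epsilon&\alpha\end{pmatrix}$ in $\PGL(2)$ is well defined on $\fg$, because scalars act trivially under $\on{Ad}$.

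First I will check that $X_1,X_2$ span $\on{Ker}(p)$. From the formula for $p$ in \S\ref{odd depth}, the element $X_1$ has $b=c=0$, so $p(X_1)=0$. The element $X_2$ has $b=\frac12$ and $c=-\frac\epsilon2$, so $c\epsilon^{-1}+b=0$ and $b\epsilon+c=0$, giving $p(X_2)=0$. The two elements are linearly independent. The kernel is $2$-dimensional over $\bF$, since $\fg$ is $3$-dimensional and $\ft^\CE$ is $1$-dimensional in the relevant graded piece. Hence $\{X_1,X_2\}$ is a basis.

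Next I compute $gX_ig^{-1}$, using $g^{-1}=D^{-1}\begin{pmatrix}\alpha&-\beta\\-\beta\epsilon&\alpha\end{pmatrix}$ with $D=\alpha^2-\beta^2\epsilon$. For $X_1$ this gives
$$gX_1g^{-1}=\frac{1}{2D}\begin{pmatrix}\alpha^2+\beta^2\epsilon&-2\alpha\beta\\ 2\alpha\beta\epsilon&-(\alpha^2+\beta^2\epsilon)\end{pmatrix}.$$
Reading off the coefficients of $X_1$ (diagonal part) and $X_2$ (off-diagonal part, whose $(1,2)$ entry is $\frac12$ times the coefficient), I get $gX_1g^{-1}=\frac{\alpha^2+\beta^2\epsilon}{D}X_1-\frac{2\alpha\beta}{D}X_2$. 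This is the first column of the claimed matrix.

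The same computation for $X_2$ gives $gX_2g^{-1}=-\frac{2\alpha\beta\epsilon}{D}X_1+\frac{\alpha^2+\beta^2\epsilon}{D}X_2$, which is the second column. The result is automatically in the kernel of $p$ by $\on{Ad}_{T^\CE}$-invariance of $p$, which serves as a consistency check on the arithmetic.

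Finally, membership in $\Sp(2,\bF)=\SL(2,\bF)$ follows from the determinant:
$$\frac{(\alpha^2+\beta^2\epsilon)^2-4\alpha^2\beta^2\epsilon}{D^2}=\frac{(\alpha^2-\beta^2\epsilon)^2}{D^2}=1.$$
There is no real obstacle here. The only point needing care is bookkeeping of the factor $\frac12$ and the sign of $\epsilon$ in $X_2$ when converting matrix entries to coordinates.
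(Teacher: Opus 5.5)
Your proposal is correct: $X_1,X_2$ lie in $\on{Ker}(p)$, and the coordinates of $gX_1g^{-1}$ and $gX_2g^{-1}$ in this basis give exactly the stated columns. The determinant equals $1$, which is all that membership in $\Sp(2,\bF)=\SL(2,\bF)$ requires. The paper gives no written proof and treats the lemma as an immediate verification, so your matrix computation is essentially the intended argument.
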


Fix $r\in\bF^\times$. Consider the Weil representation $\CW_{\psi_r}$ of $\on{Mp}(2,\bF)$ in the Schwartz space $\CS(\bF)$ arising
from the Heisenberg action in $\CS(\bF)$ with central character $\psi_r(x)=\psi(rx)$. More precisely, the Heisenberg group $\CH$
is the central extension of $\bF^2$ with the help of $\bF$, with multiplication
$(a,b;e)\cdot(a',b';e')=(a+a',b+b';e+e'+\frac12(ab'-a'b))$. Its irreducible representation $\rho$ with central character $\psi_r$
is realized in $\CS(\bF) : \rho(a,b,0)\phi(\xi)=\psi(r(b\xi+\frac12 ab))\phi(\xi+a)$. The corresponding Weil operators for
the projective action of $\Sp(2,\bF)$ in $\CS(\bF)$ satisfy the relation $A(X)\rho(a,b)A(X)^{-1}=\rho(X(a,b))$, where we view $(a,b)$ as
a column vector. In other words, for $X=\begin{pmatrix}x_{11}&x_{12}\\ x_{21}&x_{22}\end{pmatrix}$, $X(a,b)=(x_{11}a+x_{12}b,x_{21}a+x_{22}b)$.
Now consider the torus $\bE^1=\left\{\begin{pmatrix}u&\epsilon v\\ v&u\end{pmatrix} : u^2-\epsilon v^2=1\right\}\subset\Sp(2,\bF)$.
Then $A\begin{pmatrix}u&\epsilon v\\ v&u\end{pmatrix}\rho(a,b)A\begin{pmatrix}u&\epsilon v\\ v&u\end{pmatrix}^{-1}=
\rho(ua+\epsilon vb,va+ub)$.
The restriction of the metaplectic 2-fold cover $\on{Mp}(2,\bF)\to\Sp(2,\bF)$ to the compact subgroup $\bE^1\subset\Sp(2,\bF)$ splits.
In fact, there are two splittings, whose ratio is a quadratic character of $\bE^1$.
The following lemma records an explicit formula for a particular splitting. To formulate it we need a couple more notations.
For $a\in\bF^\times$, we denote by $\gamma\!\left(y\mapsto\psi\left(\frac{ay^2}{2}\right)\right)$ the absolute Weil index of the quadratic character
$\bF\ni y\mapsto\psi\left(\frac{ay^2}{2}\right)$. For $u+v\sqrt{\epsilon}\in\bE^1$, we denote by $\zeta(u+v\sqrt{\epsilon})=\pm1$ the
factor defined in~\cite[Theorem 5.7]{A} (in fact, there are two possibilities for $\zeta$ depending on additional choices; and we always
choose $\alpha=1$ in {\em loc.cit.}).
For $a,b\in\bF^\times$, $(a,b)_\bF$ stands for the Hilbert symbol.

\begin{lem}
  \label{RRA}
  A homomorphic lift of $\bE^1=\left\{\begin{pmatrix}u&\epsilon v\\ v&u\end{pmatrix} : u^2-\epsilon v^2=1\right\}$ to the 2-fold cover
  $\Sp(2,\bF)\leftarrow\on{Mp}(2,\bF)\subset\on{Aut}(\CS(\bF))$ is given for $v\ne0$ by the following automorphism $A_{u,v}$ of the
  Schwartz space $\CS(\bF)$:
  \begin{multline*}A_{u,v}\phi(\xi)=
  \zeta(u+v\sqrt{\epsilon})\gamma\!\left(y\mapsto\psi\left(-\frac{rvy^2}{2\epsilon}\right)\right)^{-1}|\epsilon v|_\bF^{-1/2}|r|_\bF^{1/2}\\
  \times\int_{\eta\in\bF}\psi\left(-\frac{r}{2\epsilon v}(u\xi^2+u\eta^2-2\xi\eta)\right)\phi(\eta)d\eta,
  \end{multline*}
  where $d\eta$ is the Haar measure on $\bF$ self dual with respect to the pairing $\langle\xi,\eta\rangle=\psi(\xi\eta)$
  (and hence $|r|_\bF^{1/2}d\eta$ is self dual with respect to the pairing $\psi_r(\xi\eta)$).
%  $\Gamma_\psi\left(-\frac{rv}{\epsilon}\right)$ is the absolute Weil index of the quadratic character
%  $\bF\ni z\mapsto\psi\left(-\frac{rv}{\epsilon}z^2/2\right)$, and $\zeta(u+v\sqrt{\epsilon})=\pm1$ is
%  defined in~\cite[Theorem 5.7]{A} (in fact, there are~2 possibilities for $\zeta$ depending on additional choices).
  Furthermore, for $v=0$, $u=1$ we
  have $A_{1,0}\phi(\xi)=\phi(\xi)$, and for $v=0$, $u=-1$ we have 
  $A_{-1,0}\phi(\xi)=(-1,-\epsilon)_\bF\zeta(-1)\gamma\!\left(y\mapsto\psi\left(\frac{ry^2}{2}\right)\right)^2\phi(-\xi)$.
\end{lem}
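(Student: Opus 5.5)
The plan is to verify three things: $A_{u,v}$ intertwines $\rho$ correctly, it is unitary, and the resulting map $\bE^1\to\on{Mp}(2,\bF)$ is a homomorphism. For the first two we treat the integral part of $A_{u,v}$ (everything except the scalar $\zeta(u+v\sqrt\epsilon)$) as the standard Weil operator attached to a symplectic matrix with nonzero lower-left entry. The homomorphism property we then take from Aubert's explicit splitting~\cite[Theorem 5.7]{A}.

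\emph{Step 1: the kernel.} For $X=\begin{pmatrix}u&\epsilon v\\ v&u\end{pmatrix}$ with $v\ne0$, we check directly that the kernel operator
\[
K\phi(\xi)=\int_\bF\psi\bigl(-\tfrac{r}{2\epsilon v}(u\xi^2+u\eta^2-2\xi\eta)\bigr)\phi(\eta)\,d\eta
\]
satisfies $K\rho(a,b)=\rho(X(a,b))K$. This reduces to two generator computations, for $\rho(a,0)$ and $\rho(0,b)$. Each one is a change of variables $\eta\mapsto\eta+a$ followed by completing the square. The relation $u^2-\epsilon v^2=1$ is what makes the cross terms match.

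\emph{Step 2: the normalization.} The factor $|\epsilon v|^{-1/2}|r|^{1/2}$ together with the inverse Weil index makes $K$ unitary. We see this by factoring $X$ via the Bruhat decomposition as (lower unipotent)$\cdot w\cdot$(diagonal)$\cdot$(lower unipotent), where $w$ is the Weyl element. Under this factorization $K$ becomes multiplication by a quadratic character, a Fourier transform, a dilation, and another quadratic multiplier. The constant is then read off from Weil's formula $\int\psi(ay^2/2)\hat\phi=\gamma(a)|a|^{-1/2}\cdots$. At this point $A_{u,v}$ lies in $\on{Mp}(2,\bF)$ over $X$, and is determined up to sign.

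\emph{Step 3: the homomorphism, the main obstacle.} The products $A_{X}A_{X'}$ and $A_{XX'}$ differ by the Rao cocycle $c(X,X')\in\{\pm1\}$. This cocycle is given by Hilbert symbols and Weil indices of $x(X)=v$, $x(X')=v'$ and $x(XX')$. By~\cite[Theorem 5.7]{A} with $\alpha=1$, the function $\zeta$ on $\bE^1$ satisfies $\zeta(g)\zeta(g')\zeta(gg')^{-1}=c(g,g')^{-1}$ in the normalization where the Weil index is attached to $x(g)=v$. So the plan is to match our normalization $\gamma(-rv/\epsilon)^{-1}$ with Rao's normalized section $r(g)$. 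Rao uses $\gamma(\psi_r(x(g)\cdot))$ with $x(g)$ the lower-left entry, up to $\det$ conventions. The sign $-1/\epsilon$ comes from writing the kernel in terms of the entry $\epsilon v$ versus $v$, and we absorb it by $\gamma(a b^2)=\gamma(a)$ and the Hilbert symbol identity $\gamma(ab)\gamma(1)=\gamma(a)\gamma(b)(a,b)$. This bookkeeping of constants is where errors are likely, and we would double-check it on $g=g'$ of small conductor.

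\emph{Step 4: degenerate elements.} $A_{1,0}=\mathrm{Id}$ is forced by the homomorphism property. For $u=-1$, $v=0$, write $-1=g\cdot g$ with $g=\sqrt{-1}$-type element of $\bE^1$. More robustly, take the limit / composition $A_{-1,0}=A_{u,v}A_{u',v'}$ with $(u,v)(u',v')=(-1,0)$ and $v,v'\ne0$. Then compute the composition of the two Gaussian kernels. The $\eta$-integral is a Fresnel integral that collapses to a delta function at $-\xi$, and it leaves the constant $\gamma(\cdot)\gamma(\cdot)$ times Hilbert symbols. Simplifying with $\zeta(g)\zeta(g')=\zeta(-1)c(g,g')$ and the Weil-index identities should give $(-1,-\epsilon)_\bF\,\zeta(-1)\,\gamma(r)^2$.
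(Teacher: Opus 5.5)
You have a genuine gap, and it sits in Steps 2--3, where the real content of the lemma lies. Steps 1 and 4 are fine; Step 4 is exactly the paper's computation for $h=-g^{-1}$. Write $R_g$ for the displayed operator without $\zeta(g)$, and $\gamma_a$ for $\gamma\!\left(y\mapsto\psi\left(\frac{ay^2}{2}\right)\right)$. Adams' $\zeta$ satisfies $\zeta(g)\zeta(h)=c(g,h)\zeta(gh)$ with $c(g,h)=(x(g)x(gh),x(h)x(gh))_\bF$, where $x(g)=v_g$ (or $u_g$ if $v_g=0$). (The cited splitting is Adams', not Aubert's.) So what you must show is $R_gR_h=c(g,h)R_{gh}$ with exactly this $c$.

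Step 2 does not get you there. Since $|\gamma_a|=1$, unitarity and the intertwining relation fix $R_g$ only up to a scalar of modulus one, not up to sign, and which scalar is correct is precisely the question. Step 3 defers this to a comparison with Rao's section ``up to det conventions'', and the reconciliation you sketch is not harmless. By $\gamma_a\gamma_b=\gamma_1\gamma_{ab}(a,b)_\bF$, trading the argument $-rv/\epsilon$ for Rao-style $rv$ replaces $R_g$ by $\lambda\,(-\epsilon,rv_g)_\bF\,R_g$ with $\lambda=\gamma_{-\epsilon}/\gamma_1$. This multiplies the cocycle by $\lambda\,(-\epsilon,rv_gv_hv_{gh})_\bF$, and $\lambda^2=(\epsilon,-1)_\bF$. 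So unless $-\epsilon$ is a square, one of two things happens:
\begin{enumerate}
\item you leave the two-fold cover, when $\lambda=\pm i$ (e.g.\ $\epsilon$ a uniformizer and $-1$ a non-square in the residue field); or
\item you obtain a cocycle that Adams' $\zeta$ does not trivialize.
\end{enumerate}
The only free rewriting is $\gamma_{-rv/\epsilon}=\gamma_{-r\epsilon v}$. That the argument must lie in the square class of $-r$ times the upper-right entry $\epsilon v$ is the content of the lemma, not bookkeeping.

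The paper settles this directly by composing kernels, with no appeal to Rao. Put $B_i=\epsilon v_i$. Then $B_{gh}=u_gB_h+B_gu_h$, so the integral over the intermediate variable is a Gaussian $\psi(a\mu^2/2+\cdots)$ with $a=-rB_{gh}/(B_gB_h)$. The oscillatory Gaussian formula gives
\[
R_gR_h=\frac{\gamma_{-rB_{gh}}\,\gamma_{-rB_{gh}/(B_gB_h)}}{\gamma_{-rB_g}\,\gamma_{-rB_h}}\,R_{gh},
\]
with the absolute-value factors matching those of $R_{gh}$. Using $\gamma_{as^2}=\gamma_a$ and the product formula above, the scalar collapses to $(B_gB_{gh},B_hB_{gh})_\bF=(v_gv_{gh},v_hv_{gh})_\bF=c(g,h)$.

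Your Step 4 then reads $R_gR_{-g^{-1}}=\gamma_{-r\epsilon v_g}^{-2}P=(-v_g,-1)_\bF\,\kappa P$, with $P\phi(\xi)=\phi(-\xi)$ and $\kappa=(-1,-\epsilon)_\bF\gamma_r^2$; this forces $R_{-1}=\kappa P$. You also need the products in which one factor is $-1$. These follow from $R_{-1}R_g=(-1,v_g)_\bF R_{-g}$ and $\kappa^2=1$. Then $A_g=\zeta(g)R_g$ is a homomorphism. This route never requires translating Rao's conventions into the present Schr\"odinger model.
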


\begin{proof}
Let $R_g$ denote the displayed operator without the factor $\zeta(g)$ for
$g=\begin{pmatrix}u&\epsilon v\\v&u\end{pmatrix}$ with $v\ne0$.
Direct substitution for translations and modulations gives
\[
R_g\rho_r(a,b;0)R_g^{-1}=\rho_r(ua+\epsilon vb,va+ub;0).
\]
Here we use the Heisenberg action stated above and write $\rho_r=\rho$.
Recall the absolute Weil-index identities
\[\gamma\!\left(y\mapsto\psi\left(\frac{as^2y^2}{2}\right)\right)=\gamma\!\left(y\mapsto\psi\left(\frac{ay^2}{2}\right)\right),\]
\[\gamma\!\left(y\mapsto\psi\left(\frac{-ay^2}{2}\right)\right)=\gamma\!\left(y\mapsto\psi\left(\frac{ay^2}{2}\right)\right)^{-1},\]
\[\gamma\!\left(y\mapsto\psi\left(\frac{ay^2}{2}\right)\right)^2=\gamma\!\left(y\mapsto\psi\left(\frac{y^2}{2}\right)\right)^2(a,-1)_\bF.\]
In particular $\gamma\!\left(y\mapsto\psi\left(\frac{-rvy^2}{2\epsilon}\right)\right)=\gamma\!\left(y\mapsto\psi\left(\frac{-r\epsilon vy^2}{2}\right)\right)$.
For self-dual measure, completing the square gives
\begin{equation}\label{gaussian-review}
\operatorname{Os}\!\!\int_\bF\psi(ax^2/2+bx)\,dx
=\gamma\!\left(y\mapsto\psi\left(\frac{ay^2}{2}\right)\right)|a|_\bF^{-1/2}\psi(-b^2/(2a)),\qquad a\ne0.
\end{equation}
The oscillatory integral is defined by stabilization over expanding compact
open balls. Also $(-1,-1)_\bF=1$ because the residue characteristic is odd.

Define
\[
x(g)=
\begin{cases}v,&v\ne0,\\u,&v=0,\end{cases}
\qquad
c(g,h)=(x(g)x(gh),x(h)x(gh))_\bF.
\]
This is the restriction to the torus of Adams' cocycle
\cite[\S2, equation (10)]{A}.
Complete the definition of $R_g$ by
\begin{equation}\label{exceptional}
R_1=1,\qquad
R_{-1}=\kappa P,\qquad
\kappa=(-1,-\epsilon)_\bF\gamma\!\left(y\mapsto\psi\left(\frac{ry^2}{2}\right)\right)^2,\quad
(P\phi)(\xi)=\phi(-\xi).
\end{equation}
Then
\begin{equation}\label{cocycle}
R_gR_h=c(g,h)R_{gh}.
\end{equation}

Here is a direct check. Suppose first that $v_g,v_h,v_{gh}$ are
nonzero, and put $B_i=\epsilon v_i$ for $i=g,h,gh$.
Integration in the intermediate variable in the product of the
two kernels gives the scalar
\[
\frac{\gamma\!\left(y\mapsto\psi\left(\frac{-rB_{gh}y^2}{2}\right)\right)\,
\gamma\!\left(y\mapsto\psi\left(\frac{-rB_{gh}y^2}{2B_gB_h}\right)\right)}
{\gamma\!\left(y\mapsto\psi\left(\frac{-rB_gy^2}{2}\right)\right)\gamma\!\left(y\mapsto\psi\left(\frac{-rB_hy^2}{2}\right)\right)}.
\]
The Weil-index identities reduce this expression to
\[
(B_gB_{gh},B_hB_{gh})_\bF
=(v_gv_{gh},v_hv_{gh})_\bF=c(g,h).
\]
The absolute-value factors also agree with those of $R_{gh}$.

If $h=g^{-1}$ with $v_g\ne0$, the kernels compose to the identity,
since $\gamma\!\left(y\mapsto\psi\left(\frac{ay^2}{2}\right)\right)\gamma\!\left(y\mapsto\psi\left(\frac{-ay^2}{2}\right)\right)=1$. This agrees with
$c(g,g^{-1})=(-v_g,v_g)_\bF=1$.
If $h=-g^{-1}$, they instead compose to
\[
R_gR_h=\gamma\!\left(y\mapsto\psi\left(\frac{-r\epsilon v_gy^2}{2}\right)\right)^{-2}P.
\]
On the other hand,
\[
c(g,-g^{-1})=(-v_g,-v_g)_\bF=(-v_g,-1)_\bF,
\]
and the identities above give
\[
\gamma\!\left(y\mapsto\psi\left(\frac{-r\epsilon v_gy^2}{2}\right)\right)^{-2}
=(-v_g,-1)_\bF\,\kappa.
\]
This proves \eqref{cocycle} also when $gh=-1$.
Finally, $P$ commutes with every torus kernel, and comparison of
$R_g$ with $R_{-g}$ gives
$R_{-1}R_g=(-1,v_g)_\bF R_{-g}$ for $v_g\ne0$.
Together with $\kappa^2=1$, this handles the cases in which an
individual factor equals $-1$; the identity cases are immediate.

Consequently, if $\zeta$ is the $\{\pm1\}$-valued splitting function
of \cite[Theorem 5.7]{A} for this embedding, so that
\[
\zeta(g)\zeta(h)=c(g,h)\zeta(gh),
\]
then $A_g=\zeta(g)R_g$ satisfies $A_gA_h=A_{gh}$.
%In particular the factor $(-1,-\epsilon)_\bF$ in the manuscript's formula at $-1$ must be retained.
This verifies explicitly the convention matching with the Weil operators of~\cite{R} and the splitting of~\cite{A}.\footnote{Although
Adams assumes characteristic zero, the construction in~\S5, specifically~Lemmas~5.5-5.6 and~Theorem~5.7 in the case $(-1,-1)_{\mathbf F}=1$ apply
unchanged to non-Archimedean local fields of odd positive characteristic. Taking $\alpha=1$ gives a $\{\pm1\}$-valued splitting.
Together with the Weil-index identities and the
preceding kernel calculation, this proves the lemma over every non-Archimedean local field of odd residue characteristic.}
\end{proof}

\subsection{$\Gamma$-function, unramified}
\label{gamma unr}
Given a multiplicative character $\omega\colon\CO^\times_n\to\BC^\times$, in the odd case we can restrict $\omega$ to
$\CO^\times_{2m-1}(m)\cong\CO_{m-1} : \CO_{m-1}\ni a\mapsto1+t^m a$ to obtain an additive character of $\CO_{m-1} :
\omega(1+t^m a)=\psi_{m-1}(c_\omega\cdot a)$ for certain $c_\omega\in\CO_{m-1}$ (not necessarily invertible). In the even case,
similarly for $ a\in\CO_{m-1}$ we have $\omega(1+t^{m+1} a)=\psi_{m-1}(c_\omega\cdot a)$.

We may view $\omega$ as a function $\omega\in\BC[\CO^\times_n]$ in the full conjugate dual space
of $\sigma_n=\ind_{\CO_n}^{P(\CO)_n}(\psi_n)$. (Recall that $\sigma_n$ is realized in the Schwartz space $\CS(\CO^\times_n)$, and the full conjugate dual
is realized in the dual Schwartz space $\CS'(\CO^\times_n)$.)
Using the invariant unitary structure on $\CS(\CO^\times_n)$ we embed $\CS(\CO^\times_n)$ into $\CS'(\CO^\times_n)$ and
extend the representation $\pi_\theta$ of $G(\CO_n)$ on $\CS(\CO^\times_n)$ to the representation $\pi_\theta^\wedge$ on $\CS'(\CO^\times_n)$.
We apply
$\pi_\theta^\wedge\begin{pmatrix}0&1\\ -1&0\end{pmatrix}$ to $\omega\in\ind_{\CO_n}^{P(\CO)_n}(\psi_n)^\wedge=\Res_{G(\CO_n)}^{P(\CO)_n}\pi_\theta^\wedge$.
We get $\omega^{-1}$ up to a constant multiple $\Gamma(\pi_\theta,\omega;\psi_n)\in\BC^\times$.

In the following theorem we calculate $\Gamma(\pi_\theta,\omega;\psi_n)$ explicitly.
Recall the notation introduced right before~Lemma~\ref{trivial}: $c\in\CO^\times_n$ is defined via the
identity $\varPsi(y)=\psi_n(cy)$ for any $y\in\CO_n$. We lift $\frac{c_\omega}{c_\theta}\in\CO_{m-1}$ to an element of $\CO_n$ represented
by a polynomial of degree $<m$. Put $q=c_\omega/c_\theta$, $r=c_\theta(0)$, and $c_0=q(0)$. In the even case $n=2m$ we define $b\in\bF$ by
$\varPsi(\xi t^m q)=\psi(b\xi)$. The notation $\gamma$ for the absolute Weil index and $\zeta$ for Adams' factor was introduced right
before~Lemma~\ref{RRA}.
Finally, the restriction of $\omega$ to the truncated exponential subgroup
defines a unique $\ell\in\bF$ such that
\[
\omega(1+t^m\xi+\tfrac12t^{2m}\xi^2)=\psi(\ell\xi).
\]
Here $n=2m\ge2$, and all expressions are taken modulo $t^{2m+1}$.
Using the definition of $c_\omega$ on the deeper subgroup gives
\[
\omega(1+t^m\xi)
=\psi\left(\ell\xi-\frac{rc_0}{2}\xi^2\right).
\]

\begin{thm}
  \label{compute gamma eps}
  \textup{(1)} For $n=2m-1$, \[\Gamma(\pi_\theta,\omega;\psi_n)
  =\theta\left(\frac{c_\omega}{c_\theta}+\sqrt{\epsilon}^{-1}\right)\omega\left(\frac{c^2_\omega}{c^2_\theta}-\epsilon^{-1}\right)\omega(c)^2.\]
  \textup{(2)} For $n=0$, the character $\theta$ (resp.\ $\omega$) is the inflation of the same named character $\theta\colon T^\bE\to\BC^\times$
  (resp.\ $\omega\colon\bF^\times\to\BC^\times$), and the representation $\pi_\theta$ is the inflation of a cuspidal representation $\Pi$
  of $G(\bF)$. Then
  $\Gamma(\pi_\theta,\omega;\psi_0)$ is nothing but the classical $\Gamma$-function $\Gamma(\Pi,\omega;\psi)$ of~\S\ref{Gamma classical}.
  
  \textup{(3)} For $n=2m>0$, we get for $c_0\ne0$
 \begin{multline*}
%  \Gamma(\pi_\theta,\omega;\psi_n)
%  =\omega(c)^2\theta\left(\frac{c_\omega}{c_\theta}+\sqrt{\epsilon}^{-1}\right)  
%  \zeta\left(\frac{c_0^2+\epsilon^{-1}}{c_0^2-\epsilon^{-1}}
%  -\frac{2c_0}{\sqrt{\epsilon}(c_0^2-\epsilon^{-1})}\right)\\
%  \times\Gamma_\psi\left(-\frac{2c_\theta(0)c_0}{\epsilon(c_0^2-\epsilon^{-1})}\right)^{-1}
%  \left|\frac{-2c_0}{\epsilon(c_0^2-\epsilon^{-1})}\right|_\bF^{-1/2}|c_\theta(0)|_\bF^{1/2}\\
%  \times\int_{\xi\in\bF}\psi\left(-c_\theta(0)\frac{c_0^2\epsilon+1}{4c_0}\xi^2\right)
%  \varPsi\left(-\xi t^m\frac{c_\omega}{c_\theta}\right)\omega\left(\frac{c_\omega^2}{c_\theta^2}(1+t^m\xi)^2-\epsilon^{-1}\right)\omega(1+t^m\xi)^{-1}d\xi,
  \Gamma(\pi_\theta,\omega;\psi_n)
  =\theta\left(\frac{c_\omega}{c_\theta}+\sqrt{\epsilon}^{-1}\right)\omega\left(\frac{c^2_\omega}{c^2_\theta}-\epsilon^{-1}\right)\omega(c)^2\\ 
  \times\zeta\left(\frac{c_0^2+\epsilon^{-1}}{c_0^2-\epsilon^{-1}}
  -\frac{2c_0}{\sqrt{\epsilon}(c_0^2-\epsilon^{-1})}\right)
  \gamma\!\left(y\mapsto\psi\left(\frac{c_\theta(0)c_0y^2}{(\epsilon^2c_0^2-\epsilon)}\right)\right)^{-2}
  \psi\left(\frac{(\ell-b)^2c_0}{c_\theta(0)(c_0^2-\epsilon^{-1})}\right),
  %\left|\frac{c_\theta(0)(c_0^2-\epsilon^{-1})}{2c_0}\right|_\bF^{1/2}
% \\
% \times\int_{\xi\in\bF}\psi\left(c_\theta(0)\frac{c_0^2+\epsilon^{-1}}{4c_0}\xi^2\right)
% \varPsi\left(-\xi t^m\frac{c_\omega}{c_\theta}\right)
% \omega(1+t^m\xi)
% d\xi,
  \end{multline*}
  and for $c_0=0$,  $\Gamma(\pi_\theta,\omega;\psi_n)$
  \[=\theta\left(\frac{c_\omega}{c_\theta}+\sqrt{\epsilon}^{-1}\right)\omega\left(\frac{c^2_\omega}{c^2_\theta}-\epsilon^{-1}\right)\omega(c)^2
  (-1,-\epsilon)_\bF\zeta(-1)\gamma\!\left(y\mapsto\psi\left(\frac{c_\theta(0)y^2}{2}\right)\right)^2.\]
\end{thm}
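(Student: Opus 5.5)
The plan is to compute $\pi_\theta^\wedge(w)\omega$, $w=\begin{pmatrix}0&1\\-1&0\end{pmatrix}$, by moving it to the model in which $\pi_\theta$ is induced from the stabilizer of $\theta\circ p$. On that model the Weyl element acts by left translation combined with the inducing character (respectively with the Heisenberg representation $W_\theta$). Concretely, I will transport $\omega$ along the chain of intertwiners
\[
\sigma_n\xrightarrow{S}\ind_{\CO_n}^{P(\CO)_n}(\varPsi)\xrightarrow{T}\ind_{P(\CO)_n(m)}^{P(\CO)_n}(\varXi)
\]
given by Lemmas~\ref{trivial} and~\ref{intertwiners}; in the even case I then also apply $S$ of Lemma~\ref{banal}, which lands in $\ind_{P(\CO)_{2m}(m)}^{P(\CO)_{2m}}(\CW_\Xi)$.

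The first step is the image of $\omega$ itself. Lemma~\ref{trivial} multiplies it by $\omega(c)$. The averaging in $T$ over $1+t^m a$ (resp.\ $1+t^{m+1}a$) picks out, by Fourier inversion for the self-dual measure, the single coset on which $c_\omega$ matches $c_\theta$. Thus $T(S\omega)$ is supported on $x\equiv q=c_\omega/c_\theta$ modulo the relevant congruence subgroup, up to unipotent translates. Since $\Stab(\theta\circ p)$ contains $T^\CE_n$, I will write $w\cdot\begin{pmatrix}q&0\\0&1\end{pmatrix}$ via a Bruhat-type factorization as $P(\CO)_n\cdot$(torus element)$\cdot P(\CO)_n$. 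The natural torus element is the class of $q+\sqrt\epsilon^{-1}$, i.e.\ $\begin{pmatrix}q&\epsilon^{-1}\\1&q\end{pmatrix}$ up to scaling; it produces the factor $\theta(q+\sqrt{\epsilon}^{-1})$, and its determinant $q^2-\epsilon^{-1}$ gives the factor $\omega(q^2-\epsilon^{-1})$. The second factor $\omega(c)$ comes from undoing $S$ on the $\omega^{-1}$ side.

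For odd $n$ this is the whole argument. The inducing datum is a character, so after the factorization only evaluations remain, and applying $T^{-1}$ and $S^{-1}$ returns a multiple of $\omega^{-1}$, which gives (1). Part (2) is the case $m=0$, where all the congruence subgroups are trivial. There I will just note that $\pi_\theta$ is inflated from $\Pi|_{G(\bO)}$-type data and invoke Lemma~\ref{GamGam}, noting that the Weyl element here is $\begin{pmatrix}0&1\\-1&0\end{pmatrix}$; equivalently, the classical finite-field computation is the $\bF$-version of the same definition.

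For even $n=2m$ the same factorization holds, but the torus element now acts on the fibre $\CW_\Xi\cong\CS(\bF)$ through the Weil operator of its image in $T^\bE\to\Sp(2,\bF)$. Lemma~\ref{TE} identifies that image for $\alpha=c_0$, $\beta=\epsilon^{-1}$ as $u+v\sqrt\epsilon$ with $u=\frac{c_0^2+\epsilon^{-1}}{c_0^2-\epsilon^{-1}}$ and $v=\frac{-2c_0}{\epsilon(c_0^2-\epsilon^{-1})}$. Lemma~\ref{RRA} then gives an explicit Gaussian kernel, with central parameter $r=c_\theta(0)$. The fibre vector attached to $\omega$ is the function $\xi\mapsto\varPsi(-\xi t^mq)\,\omega(1+t^m\xi)$, which by the preceding formulas equals $\psi\bigl((\ell-b)\xi-\frac{rc_0}{2}\xi^2\bigr)$ up to sign conventions. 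I will evaluate $A_{u,v}$ on it at $\xi=0$ (this is $S^{-1}$ of Lemma~\ref{banal}) using \eqref{gaussian-review}.

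\textbf{Main obstacle.} Combining the two quadratic forms and completing the square must produce exactly $\gamma(\cdot)^{-2}$ and $\psi\bigl((\ell-b)^2c_0/(r(c_0^2-\epsilon^{-1}))\bigr)$, with the absolute values cancelling. This requires careful use of the Weil-index identities: the product of the index in $A_{u,v}$ with the index from the Gaussian integral must collapse to a square. When $c_0=0$ one has $v=0$ and $u=-1$, and the formula for $A_{-1,0}$ in Lemma~\ref{RRA} gives the second expression directly. I will try the even case first on the quadratic algebra alone, checking that the coefficient of $\eta^2$, namely $-\frac{ru}{2\epsilon v}-\frac{rc_0}{2}$, simplifies to a multiple of $c_0/(\epsilon^2c_0^2-\epsilon)$.
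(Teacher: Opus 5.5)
Your overall architecture is the paper's: transport $\omega$ through Lemmas~\ref{trivial}, \ref{intertwiners} (and \ref{banal} for even $n$), let $w$ act through a torus element of the stabilizer, and transport back. Your even-case endgame is also exactly the paper's: Lemma~\ref{TE} with $\alpha=c_0$, $\beta=\epsilon^{-1}$, the kernel of Lemma~\ref{RRA} evaluated at $\xi=0$, the Gaussian \eqref{gaussian-review} with quadratic coefficient $r(\epsilon^{-1}-c_0^2)/(4c_0)$, and $A_{-1,0}$ when $c_0=0$. However, the localization step in the middle, which is where every factor of the answer comes from, is wrong as stated. Fourier inversion in $T$ does not pin $x$; it gives
\[
T\phi_\omega\begin{pmatrix}x&z\\0&1\end{pmatrix}
=\mathrm{const}\cdot\varPsi(z_{\geq m})\,\omega(x)\,\delta_{-c_\omega/c_\theta}(z_{\leq m-1}).
\]
So the delta function sits on the unipotent coordinate $z \bmod t^m$, while in $x$ the vector stays $\omega$-equivariant. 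It has to: the intertwiners commute with right translation by $\mathrm{diag}(a,1)$, so an $\omega$-eigenvector can never be supported on a single class $x\equiv q$. Moreover $q=c_\omega/c_\theta$ need not be a unit ($c_0=0$ is one of the cases of (3)), so ``$x\equiv q$'' and $\mathrm{diag}(q,1)$ are not even available in general. Likewise, a two-sided factorization $w\,\mathrm{diag}(q,1)\in P\cdot\tau\cdot P$ exists for essentially any torus element $\tau$ with unit lower-left entry. It therefore cannot single out $q+\sqrt{\epsilon}^{-1}$; in your sketch that torus element is chosen by inspection rather than forced by the computation.

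What is needed is the one-sided identity \eqref{identity}: $\begin{pmatrix}x&z\\0&1\end{pmatrix}w$ equals the class of $z+\sqrt{\epsilon}^{-1}$ times $\begin{pmatrix}(z^2-\epsilon^{-1})/x&-z\\0&1\end{pmatrix}$, uniquely, since $T^\CE\cap P(\CO)$ is trivial. In the odd case this turns the function above into $\mathrm{const}\cdot\varPsi(-z_{\geq m})\,\theta(z+\sqrt{\epsilon}^{-1})\,\omega(z^2-\epsilon^{-1})\,\omega(x)^{-1}\,\delta_{c_\omega/c_\theta}(z_{\leq m-1})$, and $T^{-1}$ evaluated at $\Id$ then sets $z=q$. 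The factor $\omega(q^2-\epsilon^{-1})$ is $\omega$ of the new diagonal entry, which is a unit for every $z$ because $\epsilon$ is not a square. The same correction is what actually produces the factor $\varPsi(-\xi t^m q)$ in your even-case fibre vector. It comes from the unipotent entry $-(1+t^m\xi)z$ of $\mathrm{diag}(1+t^m\xi,1)\begin{pmatrix}(z^2-\epsilon^{-1})/x&-z\\0&1\end{pmatrix}$ with $z_{\leq m}=q$, and it has no source in the localization you describe.

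Two smaller points. First, part (2) is purely definitional. For $n=0$ one has $\CO_0=\bF$, $G(\CO)_0=G(\bF)$ and $\sigma_0=\ind_{\bF}^{P(\bF)}(\psi)$ on $\CS(\bF^\times)$, so the definition is verbatim that of \S\ref{Gamma classical}. Lemma~\ref{GamGam} plays no role, and neither $G(\bO)$ nor any finite field enters. Second, before applying Lemma~\ref{RRA} you must justify that its Schr\"odinger coordinate is the $\xi$ of $\mathrm{diag}(1+t^m\xi,1)$, and that $r=c_\theta(0)$. The paper gets the latter from the commutator $\begin{pmatrix}1&t^{2m}\xi\eta\\0&1\end{pmatrix}$ of $\mathrm{diag}(1+t^m\xi,1)$ and $\begin{pmatrix}1&t^m\eta\\0&1\end{pmatrix}$, on which $\varPsi$ equals $\psi(c_\theta(0)\xi\eta)$.
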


\begin{proof}
  From~Lemma~\ref{trivial} it follows easily that
  $\Gamma(\pi_\theta,\omega;\psi_n)=\omega(c)^2\Gamma(\pi_\theta,\omega,\varPsi)$.
  %(i.e.\ the $\Gamma$-function is independent of the choice of an additive character $\Psi$ of $\CO_n$ provided that $\Psi|_{\CO_n(n)}\not\equiv1$).  
  We start with the function $\phi_\omega\in\ind_{\CO_n}^{P(\CO)_n}(\varPsi)^\wedge$,
  $\phi_\omega\begin{pmatrix}x&z\\ 0&1\end{pmatrix}=\varPsi(z)\omega(x)$.

  (1) We consider the case of odd $n=2m-1$.
  According to~Lemma~\ref{intertwiners},
  \begin{multline*}T\phi_\omega\begin{pmatrix}x&z\\ 0&1\end{pmatrix}
  =\int_{ a\in\CO_{m-1}}\phi_\omega\left(\begin{pmatrix}1+t^m a&0\\ 0&1\end{pmatrix}\begin{pmatrix}x&z\\ 0&1\end{pmatrix}\right)d a\\
  =\int_{ a\in\CO_{m-1}}\varPsi((1+t^m a)z)\omega((1+t^m a)x)d a\\
  =\int_{ a\in\CO_{m-1}}\psi_{m-1}(c_\theta\cdot a\cdot z_{\leq m-1})\psi_{m-1}(c_\omega\cdot a)\varPsi(z_{\geq m})\omega(x)d a\\
  =|c(0)|_\bF^{-\frac{m}{2}}\varPsi(z_{\geq m})\omega(x)\delta_{-c_\omega/c_\theta}(z_{\leq m-1}),
  \end{multline*}
  where $c(0)\in\bF^\times$ is the constant term of $c\in\CO^\times_n$ (the first factor comes from our normalization of the
  Haar measure in the definition of the intertwining operator $T$), $\delta_{-c_\omega/c_\theta}(?)$ is the Dirac delta-function at
  $-c_\omega/c_\theta\in\CO_{m-1}$, and for $z=z_0+z_1t+\ldots+z_{2m-1}t^{2m-1}$ we set
  $z_{\leq m-1}=z_0+\ldots+z_{m-1}t^{m-1}$, while $z_{\geq m}=z_mt^m+\ldots+z_{2m-1}t^{2m-1}$.
  Now 
\begin{equation}
  \label{identity}
\begin{pmatrix}x&z\\ 0&1\end{pmatrix}\begin{pmatrix}0&1\\ -1&0\end{pmatrix}=\begin{pmatrix}\frac{-xz}{z^2-\epsilon^{-1}} &  
\frac{-\epsilon^{-1}x}{z^2-\epsilon^{-1}}\\ \frac{-x}{z^2-\epsilon^{-1}} & \frac{-xz}{z^2-\epsilon^{-1}}\end{pmatrix}
\begin{pmatrix}\frac{z^2-\epsilon^{-1}}{x}&-z\\ 0&1\end{pmatrix}.
\end{equation}
In the RHS, the first matrix represents the element
$\frac{-xz}{z^2-\epsilon^{-1}}+\frac{-x\epsilon^{-1}}{z^2-\epsilon^{-1}}\sqrt{\epsilon}\in\CE^\times$.
We have $\theta(\frac{-xz}{z^2-\epsilon^{-1}}+\frac{-x\epsilon^{-1}}{z^2-\epsilon^{-1}}\sqrt{\epsilon})=
\theta(z+\sqrt{\epsilon}^{-1})$ since $\theta\colon\CE^\times\to\BC^\times$ factors through $\CE^\times/\CK^\times$.
It follows that
\begin{multline*}
  \pi_\theta^\wedge\begin{pmatrix}0&1\\ -1&0\end{pmatrix} T\phi_\omega\begin{pmatrix}x&z\\ 0&1\end{pmatrix}
  =\theta(z+\sqrt{\epsilon}^{-1})T\phi_\omega\begin{pmatrix}\frac{z^2-\epsilon^{-1}}{x}&-z\\ 0&1\end{pmatrix}\\
  =|c(0)|_\bF^{-\frac{m}{2}}\varPsi(-z_{\geq m})\theta(z+\sqrt{\epsilon}^{-1})\omega(z^2-\epsilon^{-1})\omega(x)^{-1}\delta_{-c_\omega/c_\theta}(-z_{\leq m-1}).   
% =\theta(z+\sqrt{\epsilon^{-1}})\int_{ a\in\CO_{m-1}}\varPsi(-(1+t^m a)z)\omega((1+t^m a)(z^2-\epsilon^{-1})x^{-1})d a.
  \end{multline*}
  It remains to apply the inverse intertwining operator to the resulting function, and compute its value at $\Id\in P(\CO)_n$.
  From~Lemma~\ref{intertwiners}, we obtain the desired value (the prefactor $|c(0)|_\bF^{-\frac{m}{2}}$ cancels out due to the choice
  of the Haar measure in the definition of the intertwining operator $T^{-1}$).
  %\[\int\int_{ b\in\CO_{m-1},\  a\in\CO_{m-1}}
  %\theta( b-\sqrt{\epsilon}^{-1})\varPsi((1+t^m a) b)\omega((1+t^m a)( b^2-\epsilon^{-1}))d a d b.\]
  %Finally, we replace $\varPsi((1+t^m a) b)\omega((1+t^m a)$ by
  %$\psi_{m-1}(c_\theta\cdot a b)\psi_{m-1}(c_\omega\cdot a)$, and integrate over $ a$. We get the delta-function of
  %$\frac{-c_\omega}{c_\theta}$.

  (2) is obvious.
  
  (3) In the even case $n=2m$, we still have
  \[T\phi_\omega\begin{pmatrix}x&z\\ 0&1\end{pmatrix}=|c(0)|_\bF^{-\frac{m}{2}}f\begin{pmatrix}x&z\\ 0&1\end{pmatrix}
  :=|c(0)|_\bF^{-\frac{m}{2}}\varPsi(z_{\geq m})\omega(x)\delta_{-c_\omega/c_\theta}(z_{\leq m-1}).\] But the calculation of
  $\pi_\theta^\wedge\begin{pmatrix}0&1\\ -1&0\end{pmatrix} T\phi_\omega\begin{pmatrix}x&z\\ 0&1\end{pmatrix}=
  \pi_\theta^\wedge(z+\sqrt{\epsilon}^{-1})T\phi_\omega\begin{pmatrix}\frac{z^2-\epsilon^{-1}}{x}&-z\\ 0&1\end{pmatrix}$
  is more complicated. Following~\cite[\S5.6.4]{BK}, in order to calculate the action of
  $\pi_\theta^\wedge(z+\sqrt{\epsilon}^{-1})$, we
  consider the projection of $z+\sqrt{\epsilon}^{-1}\in T^\CE=T^\bE\times T^\CE(1)$ to $T^\bE$.
  Namely, for $z=z_0+z_1t+\ldots+z_{2m}t^{2m}$, this projection equals $z_0+\sqrt{\epsilon}^{-1}$.
  Then $\pi_\theta^\wedge(z+\sqrt{\epsilon}^{-1})$ acts via the character $\theta(z+\sqrt{\epsilon}^{-1})$ times
  the action of $z_0+\sqrt{\epsilon}^{-1}$ in the quasi-Heisenberg representation $\CW_\theta$ via a lift
  $s\colon T^\bE\cong\bE^1\to\on{Mp}(2,\bF)$.
  %have to decompose $z+\sqrt{\epsilon}^{-1}\in T^\CE$ according to
  %$T^\CE=T^\bE\times T^\CE(1)$. Namely, for $z=z_0+z_1t+\ldots+z_{2m}t^{2m}$, we have
  %$z+\sqrt{\epsilon}^{-1}=(z_0+\sqrt{\epsilon}^{-1})\cdot\frac{z+\sqrt{\epsilon}^{-1}}{z_0+\sqrt{\epsilon}^{-1}}$.
  %Then the second factor acts via the character $\theta\left(\frac{z+\sqrt{\epsilon}^{-1}}{z_0+\sqrt{\epsilon}^{-1}}\right)$,
  %while the first factor acts in the quasi-Heisenberg representation $W_\theta$ via a lift
  %$s\colon T^\bE\cong\bE^1\to\on{Mp}(2,\bF)$.
  According to~Lemma~\ref{TE}, $z_0+\sqrt{\epsilon}^{-1}\in T^\bE$ corresponds to
  $u+v\sqrt{\epsilon}\in\bE^1$, where $u=\frac{z_0^2+\epsilon^{-1}}{z_0^2-\epsilon^{-1}}$, $v=\frac{-2z_0}{\epsilon(z_0^2-\epsilon^{-1})}$.
  Hence by~Lemma~\ref{banal},
  \begin{multline*}
  S\left(\pi_\theta^\wedge\begin{pmatrix}0&1\\ -1&0\end{pmatrix} T\phi_\omega\begin{pmatrix}x&z\\ 0&1\end{pmatrix}\right)=\\
  |c(0)|_\bF^{-\frac{m}{2}}\theta(z+\sqrt{\epsilon}^{-1})\cdot  
  %|c(0)|_\bF^{-\frac{m}{2}}\theta\left(\frac{z+\sqrt{\epsilon}^{-1}}{z_0+\sqrt{\epsilon}^{-1}}\right)\cdot
  (u+v\sqrt{\epsilon})\left((Sf)\begin{pmatrix}\frac{z^2-\epsilon^{-1}}{x}&-z\\ 0&1\end{pmatrix}(\xi)\right),
  %(u+v\sqrt{\epsilon})\Big( S\big(\varPsi(-z_{\geq m})\omega(z^2-\epsilon^{-1})\omega(x)^{-1}\delta_{-c_\omega/c_\theta}(-z_{\leq m-1})\big)\Big),
  \end{multline*}
  where $S$ is the intertwining operator of~Lemma~\ref{banal}, so that
  \begin{multline}
    \label{swing}
%  S\big(\varPsi(-z_{\geq m})\omega(z^2-\epsilon^{-1})\omega(x)^{-1}\delta_{-c_\omega/c_\theta}(-z_{\leq m-1})\big)\\
%  =\varPsi(-z_{\geq m})\varPsi(-\xi t^m z_{\leq m})\omega\left(z^2(1+t^m\xi)^2-\epsilon^{-1}\right)\omega(x)^{-1}\omega(1+t^m\xi)^{-1}
%  \delta_{c_\omega/c_\theta}(z_{\leq m-1})
    (Sf)\begin{pmatrix}\frac{z^2-\epsilon^{-1}}{x}&-z\\ 0&1\end{pmatrix}(\xi)=f\left(\begin{pmatrix}1+t^m\xi&0\\ 0&1\end{pmatrix}
        \begin{pmatrix}\frac{z^2-\epsilon^{-1}}{x}&-z\\ 0&1\end{pmatrix}\right)\\
  =\varPsi(-z_{\geq m})\varPsi(-\xi t^m z_{\leq m})\omega(z^2-\epsilon^{-1})
  \omega(x)^{-1}\omega(1+t^m\xi)
  \delta_{c_\omega/c_\theta}(z_{\leq m-1})
  \end{multline}
  (as a function of $\xi\in\bF$).
  %Indeed, denoting for short
  %$h(x,z):=\varPsi(-z_{\geq m})\omega(z^2-\epsilon^{-1})\omega(x)^{-1}\delta_{-c_\omega/c_\theta}(-z_{\leq m-1})$, by the definition
  %of $S$ in~Lemma~\ref{banal}, $(Sh)(x,z)(\xi)=h((1+t^m\xi)x,(1+t^m\xi)z)$, and direct substitution gives~\eqref{swing}.

  Now the action of $u+v\sqrt{\epsilon}$ in the Weil representation $\CW_\theta$ is given by~Lemma~\ref{RRA}. Namely, it is realized
  in the Schr\"odinger model $\CS(\bF)$ of an irreducible Heisenberg representation. Here $\bF$ has coordinate corresponding to the
  first coordinate in the tautological 2-dimensional representation of $\Sp(2,\bF)$. If this tautological representation is
  realized in $\on{Ker}(p\colon \fg(\CO)_m(m)\to\ft^\CE_m(m))$ as in~Lemma~\ref{TE}, this is the first coordinate in the basis
  $\begin{pmatrix}\frac{t^m}{2}&0\\ 0&\frac{-t^m}{2}\end{pmatrix},
  \begin{pmatrix}0&\frac{t^m}{2}\\ \frac{-\epsilon t^m}{2}&0\end{pmatrix}$
  Note that the image of these basis elements %$\begin{pmatrix}\frac{t^m}{2}&0\\ 0&\frac{-t^m}{2}\end{pmatrix},
  %\begin{pmatrix}0&\frac{t^m}{2}\\ \frac{-\epsilon t^m}{2}&0\end{pmatrix}$ used in~Lemma~\ref{TE}
  under projection to the Lie algebra $\fp(\CO)_m(m)$ along $\ft^\CE_m(m)$
  are exactly the logarithms of $\begin{pmatrix}1+t^m&0\\ 0&1\end{pmatrix},
  \begin{pmatrix}1&t^m\\ 0&1\end{pmatrix}$. Hence the above first coordinate is nothing but the coordinate $\xi$ in
  $\begin{pmatrix}1+t^m\xi&0\\ 0&1\end{pmatrix}$.    
  We only have to determine the central character $\psi_r$ determining the irreducible Heisenberg representation $\CW_\theta$
  appearing in~Lemma~\ref{RRA}.    
  Since the commutator of $\begin{pmatrix}1+t^m\xi&0\\ 0&1\end{pmatrix}$ and
  $\begin{pmatrix}1&t^m\eta\\ 0&1\end{pmatrix}$ equals $\begin{pmatrix}1&t^{2m}\xi\eta\\ 0&1\end{pmatrix}$, and its character in the
  quasi-Heisenberg representation $\CW_\theta$ is $\varPsi(t^{2m}\xi\eta)=\psi(c_\theta(0)\xi\eta)$, we conclude that $r=c_\theta(0)$.

  Applying the inverse intertwining operator $S^{-1}$ of~Lemma~\ref{banal}, we obtain for $v\ne0$
  \begin{multline}
    \label{Adams}
    %S^{-1}(u+v\sqrt{\epsilon})\Big( S\big(\varPsi(-z_{\geq m})\omega(z^2-\epsilon^{-1})\omega(x)^{-1}\delta_{-c_\omega/c_\theta}(-z_{\leq m-1})\big)\Big)\\
  S^{-1}(u+v\sqrt{\epsilon})\left((Sf)\begin{pmatrix}\frac{z^2-\epsilon^{-1}}{x}&-z\\ 0&1\end{pmatrix}(\xi)\right)\\ 
  =\varPsi(-z_{\geq m})\omega(x)^{-1}\delta_{c_\omega/c_\theta}(z_{\leq m-1})\zeta(u+v\sqrt{\epsilon})
  \gamma\!\left(y\mapsto\psi\left(-\frac{c_\theta(0)vy^2}{2\epsilon}\right)\right)^{-1}\left|\frac{c_\theta(0)}{\epsilon v}\right|_\bF^{1/2}\\
%  \times\int_{\xi\in\bF}\psi\left(c_\theta(0)\frac{u\xi^2}{2v}\right)
%  \varPsi(-\xi t^m z_{\leq m})\omega\left(z^2(1+t^m\xi)^2-\epsilon^{-1}\right)\omega(1+t^m\xi)^{-1}d\xi,
  \times\int_{\xi\in\bF}\psi\left(-c_\theta(0)\frac{u\xi^2}{2\epsilon v}\right)
  \varPsi(-\xi t^m z_{\leq m})\omega(z^2-\epsilon^{-1})
  \omega(1+t^m\xi)
  d\xi,
  \end{multline}
  and for $v=0$, $u=-1$
  \begin{multline*}
  S^{-1}(-1)\Big( S\big(\varPsi(-z_{\geq m})\omega(z^2-\epsilon^{-1})\omega(x)^{-1}\delta_{-c_\omega/c_\theta}(-z_{\leq m-1})\big)\Big)=\\
  \varPsi(-z_{\geq m})\omega(x)^{-1}\omega(z^2-\epsilon^{-1})\delta_{c_\omega/c_\theta}(z_{\leq m-1})
  (-1,-\epsilon)_\bF\zeta(-1)\gamma\!\left(y\mapsto\psi\left(\frac{c_\theta(0)y^2}{2}\right)\right)^2.
  \end{multline*}
  Now we plug in the values of $u,v$, and apply the inverse intertwining operator $T^{-1}$ of~Lemma~\ref{intertwiners}.
  Setting $c_0=\frac{c_\omega}{c_\theta}(0)$, we obtain
\begin{multline*}
%  \Gamma(\pi_\theta,\omega;\psi_n)
%  =\omega(c)^2\theta\left(\frac{c_\omega}{c_\theta}+\sqrt{\epsilon}^{-1}\right)  
%  \zeta\left(\frac{c_0^2+\epsilon^{-1}}{c_0^2-\epsilon^{-1}}
%  -\frac{2c_0}{\sqrt{\epsilon}(c_0^2-\epsilon^{-1})}\right)\\
%  \times\Gamma_\psi\left(-\frac{2c_\theta(0)c_0}{\epsilon(c_0^2-\epsilon^{-1})}\right)^{-1}
%  \left|\frac{-2c_0}{\epsilon(c_0^2-\epsilon^{-1})}\right|_\bF^{-1/2}|c_\theta(0)|_\bF^{1/2}\\
%  \times\int_{\xi\in\bF}\psi\left(-c_\theta(0)\frac{c_0^2\epsilon+1}{4c_0}\xi^2\right)
%  \varPsi\left(-\xi t^m\frac{c_\omega}{c_\theta}\right)\omega\left(\frac{c_\omega^2}{c_\theta^2}(1+t^m\xi)^2-\epsilon^{-1}\right)\omega(1+t^m\xi)^{-1}d\xi,
  \Gamma(\pi_\theta,\omega;\psi_n)
  =\theta\left(\frac{c_\omega}{c_\theta}+\sqrt{\epsilon}^{-1}\right)\omega\left(\frac{c^2_\omega}{c^2_\theta}-\epsilon^{-1}\right)\omega(c)^2\\ 
  \times\zeta\left(\frac{c_0^2+\epsilon^{-1}}{c_0^2-\epsilon^{-1}}
  -\frac{2c_0}{\sqrt{\epsilon}(c_0^2-\epsilon^{-1})}\right)
  \gamma\!\left(y\mapsto\psi\left(\frac{c_\theta(0)c_0y^2}{(\epsilon^2c_0^2-\epsilon)}\right)\right)^{-1}
  \left|\frac{c_\theta(0)(c_0^2-\epsilon^{-1})}{2c_0}\right|_\bF^{1/2}
  \\
  \times\int_{\xi\in\bF}\psi\left(c_\theta(0)\frac{c_0^2+\epsilon^{-1}}{4c_0}\xi^2\right)
  \varPsi\left(-\xi t^m\frac{c_\omega}{c_\theta}\right)
  \omega(1+t^m\xi)
  d\xi,
  \end{multline*}
  and for $c_0=0$,  $\Gamma(\pi_\theta,\omega;\psi_n)$
  \[=\theta\left(\frac{c_\omega}{c_\theta}+\sqrt{\epsilon}^{-1}\right)\omega\left(\frac{c^2_\omega}{c^2_\theta}-\epsilon^{-1}\right)\omega(c)^2
  (-1,-\epsilon)_\bF\zeta(-1)\gamma\!\left(y\mapsto\psi\left(\frac{c_\theta(0)y^2}{2}\right)\right)^2.\]
Set $q=c_\omega/c_\theta$, $r=c_\theta(0)$, and $D=c_0^2-\epsilon^{-1}$.

The integral should be described as
an oscillatory integral, defined by stabilization over expanding
compact open balls. In general it is not absolutely convergent.

Recall $\ell\in\bF$ and $b\in\bF$ defined before the theorem. The integral can be rewritten as
\[
\operatorname{Os}\!\!\int_{\bF}\psi(A\xi^2+(\ell-b)\xi)\,d\xi,
\qquad
A=\frac{r(\epsilon^{-1}-c_0^2)}{4c_0}.
\]
For $c_0\ne0$ this coefficient is nonzero. Formula
\eqref{gaussian-review} evaluates the integral as
\[
\gamma\!\left(y\mapsto\psi\left(Ay^2\right)\right)|2A|^{-1/2}
\psi\left(-\frac{(\ell-b)^2}{4A}\right).
\]
%It is nonzero. Moreover,
Finally, $|2A|=|rD/(2c_0)|$, so its absolute value factor cancels
the absolute value factor outside the integral, while $\gamma\!\left(y\mapsto\psi\left(Ay^2\right)\right)$ contributes
the same inverse absolute Weil index outside the integral. Indeed, setting
$B=rc_0/(\epsilon^2D)$, we have
\[
 A=-B\left(\frac{\epsilon D}{2c_0}\right)^2,
 \qquad
 \gamma(y\mapsto\psi(Ay^2))=\gamma(y\mapsto\psi(By^2))^{-1}.
\]
Thus the product of the two Weil-index factors is
$\gamma(y\mapsto\psi(By^2))^{-2}$, as stated.
%This provides a further normalization check.
\end{proof}

\begin{rem}
  Recall from right above~Lemma~\ref{TE} that the indexation of irreducible representations $\pi_\theta$ of even depth $2m$
  depended on a choice of splitting $s\colon T^\bE\to\on{Mp}(2)$. According to~\cite[\S\S5.6.5,5.6.6]{BK}, in case $\bE$ is an
  unramified extension of $\bF$, there is a canonical choice of $s$. However, in case $\bE$ is ramified, there are two splittings $s,s'$
  corresponding to two choices of Adams' factor $\zeta$ in~Lemma~\ref{RRA},
  and the change of splitting changes the indexation $\pi_\theta^s\simeq\pi_{\theta\cdot\chi}^{s'}$. Here $\chi$ is the unique nontrivial
  character $T^\CE_{2m}\twoheadrightarrow T^\bE\twoheadrightarrow T^\bE/(T^\bE)^2\iso\{\pm1\}$. We conclude that
  $\displaystyle{\Gamma(\pi^{s'}_\theta,\omega;\psi_n)=\chi\left(\frac{c_\omega}{c_\theta}+\sqrt{\epsilon}^{-1}\right)
  \Gamma(\pi^s_\theta,\omega;\psi_n)}$.
\end{rem}

\subsection{Representations of $G'(\CO)$}
\label{Rep G'O}
We follow the notation of~\cite[\S6]{BK}. Let $\CE$ be a ramified
quadratic extension of $\CK$, namely $\CE=\CK(\sqrt{-t})$ for a uniformizer $t\in\CO$ (note that we may have changed the
parameter $t$ in $\CK=\bF(\!(t)\!)$). We fix an embedding
$T^\CE=\CE^\times/\CK^\times\hookrightarrow\PGL(2,\CK)=G(\CK)$ with the image consisting of the classes of the
elements $\begin{pmatrix}a&b\\ -bt&a\end{pmatrix}$. Note that the image actually lies in the bounded subgroup
$G'(\CO)\subset G(\CK)$ generated by $\begin{pmatrix}0&1\\ -t&0\end{pmatrix}$ and the Iwahori subgroup
$I=\left\{\begin{pmatrix}a&b\\ c&d\end{pmatrix} : a,d\in\CO^\times,\ b\in\CO,\ c\in t\CO\right\}\subset G'(\CO)$.
We have the $\on{Ad}_{T^\CE}$-invariant projection $p\colon\fg(\CK)\to\ft^\CE$,
$\begin{pmatrix}a&b\\ c&-a\end{pmatrix}\mapsto\begin{pmatrix}0&\frac{b-ct^{-1}}{2}\\\frac{c-bt}{2}&0\end{pmatrix}$.
Under the natural identifications, $p$ induces the same named map from $G'(\CO)_{n+\frac12}(\frac{n+1}{2})$ to
$T^\CE_{n+\frac12}(\frac{n+1}{2})$, and the latter map is a homomorphism. Here
\[T^\CE_{n+\frac12}=\BZ/2\BZ\times\{a_0+a_1\sqrt{-t}+\ldots+a_{2n+1}\sqrt{-t}^{2n+1}+a_{2n+2}t^{n+1}\}/\{a_0+a_2t+\ldots+a_{2n+2}t^{n+1}\},\]
and $a_0\in\bF^\times$, $a_{>0}\in\bF$, while $\BZ/2\BZ$ is generated by the class of $\sqrt{-t}$.

Given a character $\theta\colon T^\CE_{n+\frac12}\to\BC^\times$ of depth $n+\frac12$,
we can consider the character $\theta\circ p$ of the normal abelian subgroup $G'(\CO)_{n+\frac12}(\frac{n+1}{2})$, and its stabilizer
in $G'(\CO)_{n+\frac12}$ is $T^\CE_{n+\frac12}\cdot G'(\CO)_{n+\frac12}(\frac{n+1}{2})$. We again assume that $\bar\theta\ne\theta$.
Then the induced representation $\pi_\theta=\ind_{\on{Stab}(\theta\circ p)}^{G'(\CO)_{n+\frac12}}(\theta)$ is irreducible and special
(see~\cite[\S6.1]{BK}). Namely, we have the natural embedding $P(\CO)_n\hookrightarrow G'(\CO)_{n+\frac12}$, and the
restriction of $\pi_\theta$ to $P(\CO)_n$ remains irreducible and is naturally isomorphic to $\ind_{P(\CO)_{2m-1}(m)}^{P(\CO)_{2m-1}}(\varXi)$
(resp.\ $\ind_{P'(\CO)_{2m}(m)}^{P(\CO)_{2m}}(\varXi)$) when $n=2m-1$ (resp.\ $n=2m$). Hence this restriction is
isomorphic to $\sigma_n=\ind_{\CO_n}^{P(\CO)_n}(\psi_n)$, see~Lemmas~\ref{trivial},\ref{intertwiners}. Here the character $\varXi$
is defined as follows.

Suppose first that $n=2m-1$ is odd. Just as in~\S\ref{odd depth},
the abelian subgroup $P(\CO)_{2m-1}(m)\subset P(\CO)_{2m-1}$
has the character (also denoted by $\theta\circ p$) given by the following composition:
\begin{equation}
  \label{theta}
  P(\CO)_{2m-1}(m)\hookrightarrow G'(\CO)_{2m-\frac12}(m)\xrightarrow{p}T^{\CE}_{2m-\frac12}(m)\xrightarrow{\theta}\BC^\times.
  \end{equation}
From the explicit formula for $p$ it is clear that $\theta\circ p$ factors through the projection to the strictly upper triangular
matrices $\CO_{2m-1}(m)\subset P(\CO)_{2m-1}(m)$: 
\begin{equation}
  \label{Psi}
  P(\CO)_{2m-1}(m)\twoheadrightarrow\CO_{2m-1}(m)\xrightarrow{\varXi}\BC^\times,
  \end{equation}
where $\varXi$ is defined as follows. For an element $ b=b_mt^m+\ldots+b_{2m-1}t^{2m-1}\in\CO_{2m-1}(m)$,
we have $\varXi( b)=\theta(1+\frac{\sqrt{-t}}{2} b)$. 
By an abuse of notation, we will denote the composition~\eqref{Psi} by $\varXi$ as well.
      
We can also consider the strictly upper triangular subgroup $\CO_{2m-1}\subset P(\CO)_{2m-1}$, and its character $\varPsi$ given by the formula
\begin{equation}
  \label{Xi}
  \varPsi(b_0+b_1t+\ldots+b_mt^m+\ldots b_{2m-1}t^{2m-1})=\theta(1+\frac{\sqrt{-t}}{2}(b_mt^m+\ldots+b_{2m-1}t^{2m-1}))
\end{equation}
(so that the characters~\eqref{Psi} and~\eqref{Xi} coincide on the intersection of $P(\CO)_{2m-1}(m)$ and $\CO_{2m-1}$).
In other words, for $ a\in\CO_{m-1}$,
\begin{equation}
  \label{ctheta}
  \varPsi(t^m a)=\theta(1+\frac{\sqrt{-t}t^m}{2} a)=\psi_{m-1}(c_\theta\cdot a),
\end{equation}
for $c_\theta\in\CO_{m-1}^\times$.

In the even case $n=2m$, the subgroup $P'(\CO)_{2m}(m)$ has the character given by
the composition 
\begin{equation}
  \label{theta even}
  P'(\CO)_{2m}(m)\hookrightarrow G'(\CO)_{2m+\frac12}(m+\frac12)\xrightarrow{p}T^{\CE}_{2m+\frac12}(m+\frac12)\xrightarrow{\theta}\BC^\times.
  \end{equation}
Again $\theta\circ p$ factors through the projection to the strictly upper triangular
matrices $\CO_{2m}(m)\subset P'(\CO)_{2m}(m)$: 
\begin{equation}
  \label{Psi even}
  P'(\CO)_{2m}(m)\twoheadrightarrow\CO_{2m}(m)\xrightarrow{\varXi}\BC^\times,
  \end{equation}
where $\varXi$ is defined as follows. For an element $ b=b_mt^m+\ldots+b_{2m}t^{2m}\in\CO_{2m}(m)$, we have
$\varXi( b)=\theta(1+\frac{\sqrt{-t}}{2} b)$.
By an abuse of notation, we will denote the composition~\eqref{Psi even} by $\varXi$ as well.

We can also consider the strictly upper triangular subgroup $\CO_{2m}\subset P(\CO)_{2m}$, and its character $\varPsi$ given by the formula
\begin{equation}
  \label{Xi even}
  \varPsi(b_0+b_1t+\ldots+b_mt^m+\ldots+b_{2m}t^{2m})=\theta(1+\frac{\sqrt{-t}}{2}(b_mt^m+\ldots+b_{2m}t^{2m}))
\end{equation}
(so that the characters~\eqref{Psi even} and~\eqref{Xi even} coincide on the intersection of $P'(\CO)_{2m}(m)$ and $\CO_{2m}$).
Similarly to~\eqref{ctheta}, we have $\varPsi(t^{m+1} a)=\theta(1+\frac{\sqrt{-t}t^{m+1}}{2} a)=\psi_{m-1}(c_\theta\cdot a)$.

\subsection{$\Gamma$-function, ramified}
As in~\S\ref{gamma unr}, given a multiplicative character $\omega\colon\CO^\times_n\to\BC^\times$, we may view it as a function
$\omega$ in the full conjugate dual space $\ind_{\CO_n}^{P(\CO)_n}(\psi_n)^\wedge$ of $\sigma_n=\ind_{\CO_n}^{P(\CO)_n}(\psi_n)$. We apply
$\pi_\theta^\wedge\begin{pmatrix}0&1\\ -t&0\end{pmatrix}$ to $\omega\in \ind_{\CO_n}^{P(\CO)_n}(\psi_n)^\wedge=\Res_{G'(\CO)_{n+\frac12}}^{P(\CO)_n}\pi_\theta^\wedge$.
We get $\omega^{-1}$ up to a constant multiple $\Gamma(\pi_\theta,\omega;\psi_n)\in\BC^\times$.
Recall also the notation introduced right before~Lemma~\ref{trivial}: $c\in\CO^\times_n$ is defined via the
identity $\varPsi(y)=\psi_n(cy)$ for any $y\in\CO_n$.

For $n>0$ we make the character parameters explicit. If $n=2m-1$, then
\[
\omega(1+t^m a)=\psi_{m-1}(c_\omega a),\qquad
\varPsi(t^m a)=\psi_{m-1}(c_\theta a),\qquad a\in\CO_{m-1}.
\]
If $n=2m>0$, then
\[
\omega(1+t^{m+1}a)=\psi_{m-1}(c_\omega a),\qquad
\varPsi(t^{m+1}a)=\psi_{m-1}(c_\theta a),\qquad a\in\CO_{m-1}.
\]
In both cases $c_\theta\in\CO_{m-1}^\times$ and $c_\omega\in\CO_{m-1}$.
In the following formula, $c_\omega/c_\theta$ is lifted to $\CO_n$
as its unique polynomial representative of degree $<m$.

\begin{thm}
  \label{compute gamma t}
  \textup{(1)} For $n=0$, we have $\Gamma(\pi_\theta,\omega;\psi_n)=\omega(c)^2\theta(\sqrt{-t})$.

  \textup{(2)} For $n>0$, we have $\displaystyle{\Gamma(\pi_\theta,\omega;\psi_n)
  =\theta(\sqrt{-t})\theta\left(1+\frac{c_\omega}{c_\theta}\sqrt{-t}\right)\omega\left(1+\frac{c_\omega^2}{c_\theta^2}t\right)}\omega(c)^2$. 
\end{thm}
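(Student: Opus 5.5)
We follow the scheme of the proof of Theorem~\ref{compute gamma eps}(1). First, by Lemma~\ref{trivial}, replacing $\psi_n$ by $\varPsi$ gives $\Gamma(\pi_\theta,\omega;\psi_n)=\omega(c)^2\Gamma(\pi_\theta,\omega;\varPsi)$. So we may work with the test vector $\phi_\omega\begin{pmatrix}x&z\\ 0&1\end{pmatrix}=\varPsi(z)\omega(x)$ in $\ind_{\CO_n}^{P(\CO)_n}(\varPsi)^\wedge$. We transport it by the operator $T$ of Lemma~\ref{intertwiners} into the model $\ind(\varXi)$ in which $\pi_\theta$ restricted to $P(\CO)_n$ is naturally realized. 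The same Fourier computation as before gives
\[T\phi_\omega\begin{pmatrix}x&z\\ 0&1\end{pmatrix}=|c(0)|_\bF^{-\frac m2}\varPsi(z_{\ge m})\omega(x)\delta_{-c_\omega/c_\theta}(z_{\le m-1}),\]
with $t^{m+1}$ in place of $t^m$ when $n=2m$. For $n=0$ there is no integration: the function is $\varPsi(z)\omega(x)$ itself.

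The key step is a factorization analogous to \eqref{identity}, now for $w=\begin{pmatrix}0&1\\ -t&0\end{pmatrix}$. We seek
\[\begin{pmatrix}x&z\\ 0&1\end{pmatrix}\begin{pmatrix}0&1\\ -t&0\end{pmatrix}=\tau\cdot\begin{pmatrix}x'&z'\\ 0&1\end{pmatrix},\]
with $\tau=\begin{pmatrix}a&b\\ -bt&a\end{pmatrix}\in T^\CE$. The left side is $\begin{pmatrix}-zt&x\\ -t&0\end{pmatrix}$. Comparing the lower rows gives $bt\,x'=t$ and $-bt\,z'+a=0$, so $b=1/x'$ and $a=tz'/x'$. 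Comparing the upper rows gives $ax'=-zt$ and $az'+b=x$, so $z'=-z$ and $x'=(1+tz^2)/x$. Hence $\tau$ is the class of $a+b\sqrt{-t}=x^{-1}(-tz+\sqrt{-t})$ modulo $\CK^\times$.

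Dividing by $\sqrt{-t}$, and using $t/\sqrt{-t}=-\sqrt{-t}$, this class equals $\sqrt{-t}\,(1+z\sqrt{-t})$. Therefore
\[\theta(\tau)=\theta(\sqrt{-t})\,\theta(1+z\sqrt{-t}).\]
Since $\CE/\CK$ is ramified, $T^\CE_{n+\frac12}$ is abelian modulo the stabilizer and no Heisenberg part enters. So in both parities $\pi_\theta(\tau)$ acts on the induced model simply by the scalar $\theta(\tau)$, as in the odd unramified case; this is exactly why no Weil index appears. Applying $w$ then gives
\[\theta(\sqrt{-t})\,\theta(1+z\sqrt{-t})\,\omega(1+tz^2)\,\omega(x)^{-1}\,(T\phi_\omega)\text{-type factors at }(x',-z),\]
that is, the delta function now sits at $z_{\le m-1}=c_\omega/c_\theta$, multiplied by $\varPsi(-z_{\ge m})$.

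Finally we apply $T^{-1}$ of Lemma~\ref{intertwiners} and evaluate at the identity. The delta function forces $z=c_\omega/c_\theta$, the measure normalizations cancel, and the values assemble into the resulting function as $(\text{const})\cdot\omega^{-1}$. This yields
\[\theta(\sqrt{-t})\,\theta\bigl(1+\tfrac{c_\omega}{c_\theta}\sqrt{-t}\bigr)\,\omega\bigl(1+\tfrac{c_\omega^2}{c_\theta^2}t\bigr)\,\omega(c)^2.\]
For $n=0$ we take $z=0$ directly, since $\omega(1+tz^2)$ is trivial in $\CO_0^\times=\bF^\times$ and $\theta(1+z\sqrt{-t})$ is trivial at depth $\frac12$; this gives part (1).

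The main obstacle is the bookkeeping, not any single hard step. We must check that $\varPsi(-z_{\ge m})$ and the $\theta(1+z\sqrt{-t})$ contribution from the deep part of $z$ combine consistently with the definition $\varXi(b)=\theta(1+\frac{\sqrt{-t}}2b)$. We must also confirm that the dependence on $z_{\ge m}$ cancels, so that the output is genuinely proportional to $\omega^{-1}$. In the even case we additionally verify that $P'(\CO)_{2m}(m)$ produces no extra Gaussian factor.
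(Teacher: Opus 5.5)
Your proposal is correct and follows essentially the paper's proof. It uses the same factorization of $\begin{pmatrix}x&z\\ 0&1\end{pmatrix}\begin{pmatrix}0&1\\ -t&0\end{pmatrix}$ (your $\tau$ is the paper's first matrix, whose scalar is $x/(1+tz^2)$ rather than $x^{-1}$, harmless modulo $\CK^\times$), the intertwiners $T^{\pm1}$ of Lemma~\ref{intertwiners}, and the observation that in the ramified case $\pi_\theta$ is induced from a character, so no Weil factor appears in either parity. One aside is wrong but harmless: $\theta(1+z\sqrt{-t})$ is not trivial at depth $\frac12$; it equals $\varPsi(2z)$, which is exactly what makes the transformed vector $\theta(\sqrt{-t})\varPsi(z)\omega(x)^{-1}$ lie in the right space, and part (1) only needs the value at $\Id$, where $z=0$.
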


\begin{proof}
  (1) is immediate. The proof of (2) is the same as of~Theorem~\ref{compute gamma eps}(1) (for $n$ odd) and the beginning of proof
  of~\ref{compute gamma eps}(3) (for $n$ even), making use of the identity
  \[\begin{pmatrix}x&z\\ 0&1\end{pmatrix}\begin{pmatrix}0&1\\ -t&0\end{pmatrix}=
  \begin{pmatrix}\frac{-xzt}{1+z^2t} & \frac{x}{1+z^2t}\\ \frac{-xt}{1+z^2t} & \frac{-xzt}{1+z^2t}\end{pmatrix}
  \begin{pmatrix}\frac{1+z^2t}{x} & -z\\ 0&1\end{pmatrix}.\] 
\end{proof}

\subsection{1-dimensional analogues}
We return to the setup of~\S\ref{field F} and assume that $\bF$ has finite odd characteristic: $\bF=\bk(\!(\varpi)\!)$.
We also assume that the character $\bar\psi_n\colon\bO_n\to\BC^\times$ of~\S\ref{setup} is the composition of $\bar\psi\colon\bk\to\BC^\times$
with taking the $n$-th coefficient of a Taylor series.
Then all the constructions and results of the present section have direct analogues for representations of $G(\bO)$ in
place of $G(\CO)$. We start with~Lemma~\ref{RRA}. So $\bl\simeq\BF_{q^2}$ is a quadratic extension of $\bk\simeq\BF_q$,
and $\bar\epsilon\in\bk$ is an element such that $\sqrt{\bar\epsilon}$ is a generator of $\bl$. We fix $\bar{r}\in\bk^\times$
and consider the Weil representation $\CW_{\psi_{\bar r}}$ of $\Sp(2,\bk)$ in $\BC[\bk]$ arising from the Heisenberg action in
$\BC[\bk]$ with central character $\psi_{\bar r}(x)=\bar\psi({\bar r}x)$ just as in the discussion preceding~Lemma~\ref{RRA}.
%Over the finite field $\bk$ we use the linear Weil representation of $\Sp(2,\bk)$ on $\BC[\bk]$.
Thus it is possible to write down explicit formulas for the action
of $\Sp(2,\bk)$ in $\BC[\bk]$, in particular for the action of the unit norm torus $\bl^1\subset\Sp(2,\bk)$. To this end we denote by $\chi\colon\bk^\times\twoheadrightarrow \bk^\times/(\bk^\times)^2\iso\{\pm1\}$ the unique nontrivial quadratic
character. For $a\in\bk^\times$, the absolute Weil index is normalized by
\[
 \gamma_\bk\!(y\mapsto\bar\psi(ay^2/2))
 =q^{-1/2}\sum_{x\in\bk}\bar\psi(ax^2/2).
\]
The following lemma is proved in~\cite[Appendix A.3.2]{GH}.

\begin{lem}
  \label{Gerardin}
An element $\begin{pmatrix}u&\bar\epsilon v\\ v&u\end{pmatrix}$ of $\bl^1$ acts
by the following automorphism $A_{u,v}$ of $\BC[\bk]$ when $v\ne0$:
\[A_{u,v}\phi(\xi)=\gamma_\bk\!\left(y\mapsto\bar\psi\left(-\frac{\bar{r}vy^2}{2\bar\epsilon}\right)\right)^{-1}\sqrt{q}^{-1}
\sum_{\eta\in\bk}\bar\psi\left(-\frac{{\bar r}(u\xi^2+u\eta^2-2\xi\eta)}{2\bar\epsilon v}\right)\phi(\eta).\]
Furthermore, for $v=0$, $u=1$ we have $A_{1,0}\phi(\xi)=\phi(\xi)$, and for
$v=0$, $u=-1$ we have $A_{-1,0}\phi(\xi)=\chi(-1)\phi(-\xi)$.
\end{lem}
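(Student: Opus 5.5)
We prove Lemma~\ref{Gerardin} by the same scheme as Lemma~\ref{RRA}, now over the finite field $\bk$. There the metaplectic cover of $\Sp(2,\bk)$ splits canonically, since $\Sp(2,\bk)$ is perfect for $q>3$. So the Weil representation is a genuine linear representation, and its restriction to $\bl^1$ is a homomorphism. By Schur's lemma the Heisenberg representation $\rho$ with central character $\psi_{\bar r}$ is irreducible, so each $A(X)$ is determined up to a scalar by
\[
A(X)\rho(a,b)A(X)^{-1}=\rho(X(a,b)).
\]
The plan has two steps. First, verify that the displayed kernel operator intertwines correctly; this fixes $A_{u,v}$ up to a scalar $\lambda(u,v)$. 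Second, pin down $\lambda$ using the canonical (linear) normalization.

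For the first step we substitute directly, exactly as for $R_g$ in the proof of Lemma~\ref{RRA}. Conjugating translation $\phi(\xi)\mapsto\phi(\xi+a)$ and modulation $\phi\mapsto\bar\psi(\bar r b\xi)\phi$ through the kernel $\bar\psi\bigl(-\bar r(u\xi^2+u\eta^2-2\xi\eta)/(2\bar\epsilon v)\bigr)$ means shifting $\eta$ and completing squares. Using $u^2-\bar\epsilon v^2=1$, this gives $\rho(ua+\bar\epsilon vb,va+ub)$. We also check unitarity: the kernel composed with its adjoint is $q$ times a delta function, and the Gauss-sum normalization $\gamma_\bk$ has absolute value~$1$. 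So the prefactor $\sqrt q^{-1}$ makes $A_{u,v}$ unitary, and $\lambda$ is a root of unity.

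The second step is the main obstacle: matching the scalar with the linear Weil representation. We would use Gérardin's characterization: the canonical lift is the unique one for which $X\mapsto A(X)$ is multiplicative. Concretely, we compose two kernels $A_{g}A_{h}$ with $v_g,v_h,v_{gh}\ne0$. The intermediate sum over $\eta$ is a quadratic Gauss sum, giving the ratio of $\gamma_\bk$'s displayed in the proof of Lemma~\ref{RRA}. Over a finite field the Hilbert symbol $(\cdot,\cdot)_\bk$ is trivial, since all units are tame symbols with trivial valuation. The finite-field identities $\gamma_\bk(a s^2)=\gamma_\bk(a)$ and $\gamma_\bk(-a)=\gamma_\bk(a)^{-1}$ then make the cocycle identically $1$. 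Here $\gamma_\bk(a)$ abbreviates $\gamma_\bk(y\mapsto\bar\psi(ay^2/2))$, and $\gamma_\bk(a)^2=\chi(-1)$ is the classical Gauss sum fact. Hence no $\zeta$-correction is needed and the displayed operators form a homomorphism.

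Finally we treat $v=0$. Composing $A_g$ with $A_{g^{-1}}$ gives the identity, since $\gamma_\bk(a)\gamma_\bk(-a)=1$. Composing $A_g$ with $A_{-g^{-1}}$ gives $\gamma_\bk(-\bar r v/\bar\epsilon)^{-2}P=\chi(-1)P$, where $P\phi(\xi)=\phi(-\xi)$. This forces $A_{-1,0}=\chi(-1)P$, consistent with the formula $\kappa=(-1,-\epsilon)\gamma^2$ of Lemma~\ref{RRA} once the Hilbert symbol is dropped. Since a homomorphic lift of the cyclic group $\bl^1$ into the linear group is the restriction of the unique linear Weil representation (uniqueness again by perfectness), these operators agree with the action in \cite[Appendix A.3.2]{GH}.
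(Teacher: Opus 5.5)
Your computational core is the paper's own proof. Covariance is checked by substituting into the kernel. Two kernels are composed and the Gauss-sum ratio reduces to $1$ by the finite-field identities. The exceptional products are $A_gA_{g^{-1}}=1$ and $A_gA_{-g^{-1}}=\chi(-1)P$. You should also record, as the paper does, $A_{-g}=\chi(-1)PA_g$ and $A_{-1}^2=1$; both are immediate from the kernel and $\gamma_\bk(a)^2=\chi(-1)$. Reading off the value of $A_{-1,0}$ from one product does not by itself verify the products in which one factor is $\pm1$.

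The step that fails is your last one: homomorphic lifts of $\bl^1$ satisfying $A(X)\rho(a,b)A(X)^{-1}=\rho(X(a,b))$ are not unique. If $g\mapsto A_g$ is one, so is $g\mapsto\lambda(g)A_g$ for every character $\lambda$ of the cyclic group $\bl^1$ of order $q+1$. Perfectness of $\Sp(2,\bk)$ makes the linear lift of the \emph{whole} group unique, and hence fixes its restriction to $\bl^1$. It says nothing, however, about a torus lift that you have not shown extends to $\Sp(2,\bk)$. So the characterization you attribute to G\'erardin does not pass to the abelian subgroup, and your argument only shows that the displayed operators form \emph{some} homomorphic lift.

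The paper's proof likewise verifies only multiplicativity and covariance, and takes the identification with the Weil representation from \cite{GH}. To make your version self-contained you need one extra input. Since $\bl^1$ is cyclic, it suffices to match a single generator with the canonical Weil operator. Alternatively, compare traces: for $v\ne0$ the kernel gives
\[
\operatorname{tr}A_{u,v}=\frac{\gamma_\bk\bigl(y\mapsto\bar\psi(-\bar r(u-1)y^2/(\bar\epsilon v))\bigr)}{\gamma_\bk\bigl(y\mapsto\bar\psi(-\bar r vy^2/(2\bar\epsilon))\bigr)}=\chi(2(u-1))=\chi\bigl(-\det(g-1)\bigr),
\]
where $g=\begin{pmatrix}u&\bar\epsilon v\\ v&u\end{pmatrix}$, so that $\det(g-1)=2-2u$. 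Also $\operatorname{tr}A_{-1,0}=\chi(-1)$, which is the same expression at $g=-1$. These values are nonzero and agree with G\'erardin's character formula for the Weil representation at elements with $g-1$ invertible. Comparing traces of $\lambda(g)A_g$ with them forces $\lambda\equiv1$.
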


\begin{proof}
Write $G(a)=\gamma_\bk(y\mapsto\bar\psi(ay^2/2))$. The finite-field
Gauss-sum identities are
\[
 G(a)=\chi(a)G(1),\qquad G(-a)=G(a)^{-1},\qquad G(a)^2=\chi(-1).
\]
For torus elements $g,h$ with $v_g,v_h,v_{gh}\ne0$, put
$B_i=\bar\epsilon v_i$. Summation in the intermediate variable in the
product of the displayed kernels gives the kernel for $gh$, with scalar ratio
\[
 \frac{G(-\bar rB_{gh})G(-\bar rB_{gh}/(B_gB_h))}
 {G(-\bar rB_g)G(-\bar rB_h)}=1.
\]
If $h=g^{-1}$, the intermediate sum is a delta function and the
product is the identity. If $h=-g^{-1}$, the product is
$G(-\bar rB_g)^{-2}P=\chi(-1)P$, where $P\phi(\xi)=\phi(-\xi)$.
Finally $A_{-g}=\chi(-1)PA_g$ and $A_{-1}^2=1$.
These calculations verify multiplicativity, including all exceptional cases;
covariance with the Heisenberg action follows by substitution in the kernel.
\end{proof}

Now transferring to our present setup the notation introduced and recalled before~Theorem~\ref{compute gamma eps} (like $c_\omega\leadsto c_{\bar\omega}$, $c_0\leadsto c_0$, etc.), the same argument as in the proof of~Theorems~\ref{compute gamma eps},\ref{compute gamma t} establishes the following

\begin{prop}
  \label{Adam and Eve}
  \textup{(1)} If $\theta$ is a non-self-conjugate depth $n=2m-1$ character
  of $T^\bE$ for an unramified quadratic extension $\bE$ of $\bF$, then
  \[\Gamma(\pi_{\theta},\bar\omega;\bar\psi_n)=\theta\left(\frac{c_{\bar\omega}}
  {c_{\theta}}+\sqrt{\bar\epsilon}^{-1}\right)
  \bar\omega\left(\frac{c^2_{\bar\omega}}
  {c^2_{\theta}}-\bar\epsilon^{-1}\right)\bar\omega(c)^2.\]
  \textup{(2)} If $\theta$ is a non-self-conjugate depth~$0$ character
  of $T^\bE$ for an unramified quadratic extension $\bE$ of $\bF$, then $\theta$ is the inflation
  of a non-self-conjugate character $\vartheta$ of $T^\bl$, and $\bar\omega$ is the inflation of
  the same named character of $\bk^\times$, while $\pi_\theta$ is the inflation of a cuspidal
  representation $\pi_\vartheta$ of $G(\bk)$. Then $\Gamma(\pi_{\theta},\bar\omega;\bar\psi_0)$
  is nothing but the finite field $\Gamma$-function $\Gamma(\pi_\vartheta,\bar\omega;\bar\psi)$. 
  
  \textup{(3)} If $\theta$ is a non-self-conjugate depth $n=2m>0$ character
  of $T^\bE$ for an unramified quadratic extension $\bE$ of $\bF$, then for $c_0\ne0$
  \begin{multline*}
  \Gamma(\pi_{\theta},\bar\omega;\bar\psi_n)=\theta\left(\frac{c_{\bar\omega}}
  {c_{\theta}}+\sqrt{\bar\epsilon}^{-1}\right)
  \bar\omega\left(\frac{c^2_{\bar\omega}}
  {c^2_{\theta}}-\bar\epsilon^{-1}\right)\bar\omega(c)^2\\
  \times
  \gamma_\bk\!\left(y\mapsto\bar\psi\left(\frac{c_{\theta}(0)c_0y^2}{(\bar\epsilon^2c_0^2-\bar\epsilon)}\right)\right)^{-2}
  \bar\psi\left(\frac{(\ell-b)^2c_0}{c_{\theta}(0)(c_0^2-\bar\epsilon^{-1})}\right),
  \end{multline*}               
  and for $c_0=0$, \[\Gamma(\pi_{\theta},\bar\omega;\bar\psi_n)
  =\theta\left(\frac{c_{\bar\omega}}
  {c_{\theta}}+\sqrt{\bar\epsilon}^{-1}\right)
  \bar\omega\left(\frac{c^2_{\bar\omega}}
  {c^2_{\theta}}-\bar\epsilon^{-1}\right)\bar\omega(c)^2
  \chi(-1).\]
  \textup{(4)} If $\theta$ is a non-self-conjugate depth~$\frac12$ character of
  $T^\bE$ for a ramified quadratic extension $\bE=\bF(\sqrt{-\varpi})$ of $\bF$,
  then \[\Gamma(\pi_{\theta},\bar\omega;\bar\psi_n)=\bar\omega(c)^2\theta(\sqrt{-\varpi}).\]
  \textup{(5)} If $\theta$ is a non-self-conjugate depth $n+\frac12>\frac12$ character of
  $T^\bE$ for a ramified quadratic extension $\bE=\bF(\sqrt{-\varpi})$ of $\bF$,
  then \[\Gamma(\pi_{\theta},\bar\omega;\bar\psi_n)=
  \theta(\sqrt{-\varpi})\theta\left(1+\frac{c_{\bar\omega}}{c_{\theta}}\sqrt{-\varpi}\right)
  \bar\omega\left(1+\frac{c_{\bar\omega}^2}{c_{\theta}^2}\varpi\right)
  \bar\omega(c)^2.\]
  \hfill $\Box$
\end{prop}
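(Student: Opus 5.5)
The plan is to transport, essentially verbatim, the proofs of Theorems~\ref{compute gamma eps} and~\ref{compute gamma t} to the setting $\bF=\bk(\!(\varpi)\!)$. The dictionary is $\CO\leadsto\bO$, $t\leadsto\varpi$, $\bF\leadsto\bk$, $\psi\leadsto\bar\psi$, and oscillatory integrals over $\bF$ are replaced by finite sums over $\bk$ normalized by $q^{-1/2}$. The first step is to note that Lemmas~\ref{trivial}, \ref{intertwiners} and~\ref{banal} hold with the same proofs. In those lemmas the self-dual Haar measure on $\CO_{m-1}=\bF^m$ becomes the normalized counting measure on $\bO_{m-1}\simeq\bk^m$, self-dual for $\langle a,b\rangle=\bar\psi_n(c\varpi^m ab)$ (resp.\ $\varpi^{m+1}$). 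Lemma~\ref{trivial} again yields the reduction $\Gamma(\pi_\theta,\bar\omega;\bar\psi_n)=\bar\omega(c)^2\Gamma(\pi_\theta,\bar\omega;\varPsi)$. One then starts from $\phi_{\bar\omega}\begin{pmatrix}x&z\\0&1\end{pmatrix}=\varPsi(z)\bar\omega(x)$.

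For (1) and (5), with $n$ odd, I would compute $T\phi_{\bar\omega}$ by character orthogonality as a delta function at $z_{\le m-1}=-c_{\bar\omega}/c_\theta$. I would then apply the Weyl element using the factorization~\eqref{identity} with $\epsilon\leadsto\bar\epsilon$, or, in the ramified case, the factorization displayed in the proof of Theorem~\ref{compute gamma t} with $t\leadsto\varpi$. The torus factor acts by the scalar $\theta(z+\sqrt{\bar\epsilon}^{-1})$ (resp.\ $\theta(\sqrt{-\varpi})\theta(1+z\sqrt{-\varpi})$), since $\theta$ factors through $\bE^\times/\bF^\times$. Applying $T^{-1}$ and evaluating at $\Id$ then gives the stated formulas; the measure prefactors cancel exactly as before. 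For (5) with $n$ even, the stabilizer of $\theta\circ p$ in the ramified case is abelian modulo kernel, as in~\S\ref{Rep G'O}. The same computation therefore applies, using only the beginning of the proof of Theorem~\ref{compute gamma t}, with no Heisenberg contribution. Part (4) is the depth-$\frac12$ case. There $\sigma_0$ is the induced representation of $P(\bO)_0=P(\bk)$, and the Weyl element $\begin{pmatrix}0&1\\-\varpi&0\end{pmatrix}$ acts on $\bar\omega$ through the factorization with $z=0$. This yields $\theta(\sqrt{-\varpi})$ times $\bar\omega(c)^2$. Part (2) is a tautology: depth~$0$ means everything is inflated from $G(\bk)$, and the definition of $\Gamma$ matches the finite field one.

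Part (3) is the main step and follows the proof of Theorem~\ref{compute gamma eps}(3). The projection of $z+\sqrt{\bar\epsilon}^{-1}$ to $T^\bl$ is $z_0+\sqrt{\bar\epsilon}^{-1}$, which corresponds by Lemma~\ref{TE} to $u+v\sqrt{\bar\epsilon}\in\bl^1$ with the same formulas for $u,v$. The commutator computation again gives $\bar r=c_\theta(0)$. The key difference is that Lemma~\ref{Gerardin} replaces Lemma~\ref{RRA}. Over $\bk$ the Weil representation of $\Sp(2,\bk)$ is linear, so the splitting over $\bl^1$ is canonical: there is no Adams factor $\zeta$, and the sign at $-1$ is $\chi(-1)$. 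This sign gives the $c_0=0$ case directly.

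For $c_0\ne0$, I would apply $S$, the kernel of Lemma~\ref{Gerardin}, and then $S^{-1}$ and $T^{-1}$, and evaluate at $\Id$. This leaves a finite quadratic Gauss sum
\[q^{-1/2}\sum_{\xi\in\bk}\bar\psi(A\xi^2+(\ell-b)\xi),\qquad A=\frac{\bar r(\bar\epsilon^{-1}-c_0^2)}{4c_0}.\]
Here $\ell$ and $b$ are defined as before, now with $\bar\omega(1+\varpi^m\xi+\frac12\varpi^{2m}\xi^2)=\bar\psi(\ell\xi)$. The truncated exponential makes sense because $p$ is odd. Completing the square gives $G(2A)\,\bar\psi(-(\ell-b)^2/(4A))$. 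In this case the absolute-value factors are all $1$, and the $\sqrt q$ normalizations cancel. The identity $A=-B(\bar\epsilon D/(2c_0))^2$, together with $G(-a)=G(a)^{-1}$ and $G(as^2)=G(a)$, converts the product of the two Gauss-sum factors into $\gamma_\bk(\cdots)^{-2}$. The main obstacle I anticipate is careful bookkeeping of signs and factors of $2$ in this last matching, in particular checking that the Gauss-sum factor coming from the kernel and the one coming from the integral combine to exactly the stated exponent $-2$.
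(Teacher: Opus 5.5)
Your proposal is correct and follows the paper's route. The paper proves this proposition only by saying that the arguments of Theorems~\ref{compute gamma eps} and~\ref{compute gamma t} carry over to $\bF=\bk(\!(\varpi)\!)$, with Lemma~\ref{Gerardin} replacing Lemma~\ref{RRA}; that gives no Adams factor and the sign $\chi(-1)$ when $c_0=0$, exactly as you describe, and your completion of the square with $A=-B(\bar\epsilon D/(2c_0))^2$ correctly yields the factor $\gamma_\bk(\cdots)^{-2}$.
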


\begin{rem}
  Recall that by~Lemma~\ref{GamGam}, $\Gamma(\pi_{\theta},\bar\omega;\bar\psi_n)=\Gamma(\Pi,\omega;\psi)$.
  Under the Langlands correspondence, the $\Gamma$-function $\Gamma(\Pi,\omega;\psi)$ equals
  the $\varepsilon$-factor $\varepsilon(\frac12,V\otimes\omega^{-1};\psi)$, where
  $V$ is the 2-dimensional representation of the Weil group of $\bF$
  corresponding to $\Pi$ (and $\omega$ is viewed as the character
  of the Weil group under the local class field theory). Thus~Proposition~\ref{Adam and Eve} is essentially
  equivalent to the calculations of $\varepsilon$-factors in~\cite[\S\S23.6,25.5]{BH}.
\end{rem}

\subsection{Modifications for $\GL(2)$}
We follow~\cite[\S13]{BK}. We will consider special representations of $\CK^\times\cdot\GL(2,\CO)$ or $\CK^\times\cdot\GL'(2,\CO)$ 
(where the subgroup $\GL'(2,\CO)$
of $\GL(2,\CK)$ is generated by the Iwahori subgroup and $\begin{pmatrix}0&1\\ -t&0\end{pmatrix}$). The meaning of {\em special}
is the same as before: the restriction to $P(\CO)$ (viewed now as a subgroup of $\GL(2,\CO)$ or $\GL'(2,\CO)$) is an
irreducible representation of dimension $>1$. However, this restriction is not necessarily isomorphic to $\sigma_n$
(see~\S\ref{Rep PO} above~Lemma~\ref{trivial}) anymore.
It is isomorphic to the tensor product $\sigma_n\otimes\lambda$, where $\lambda$ is a character
$P(\CO)\twoheadrightarrow\CO^\times\twoheadrightarrow\CO^\times_k\to\BC^\times$ nontrivial on $\CO^\times_k(k)$ for some $k\geq n$.
Thus $n$ is the maximal among integers $m$ such that the restriction of our representation to
$\left\{\begin{pmatrix}1&t^m\CO\\ 0&1\end{pmatrix}\right\}$ is nontrivial. We will say that a representation of
$\CK^\times\cdot\GL(2,\CO)$ (resp.\ $\CK^\times\cdot\GL'(2,\CO)$) whose restriction to $P(\CO)$ is of the form $\sigma_n\otimes\lambda$
has {\em mirabolic depth} $n$ (resp. $n+\frac12$).

As before, any special representation is of the form $\pi_\theta$ for a
character $\theta\colon\CE^\times\to\BC^\times$ such that $\theta\ne\bar\theta$ (but we do not assume anymore that
$\theta$ factors through $T^\CE$). The central character of $\pi_\theta$ is $\mu\colon\CK^\times\to\BC^\times$, where
$\mu=\theta|_{\CK^\times}$. Let $n$ (resp.\ $n+\frac12$) be the mirabolic depth of $\pi_\theta$ in the unramified case (resp.\ in the ramified
case). Given a multiplicative character $\omega\colon\CO^\times\to\BC^\times$ such that
$(\omega\cdot\lambda^{-1})|_{1+t^{n+1}\CO}\equiv1$, the function $f_\omega(x):=(\omega\cdot\lambda^{-1})(x)$ viewed as a distribution in
$\CS'(\CO^\times_n)$, is a unique (up to multiplication with a nonzero scalar) vector of diagonal character $\omega$, i.e.\ such that
$\pi_\theta^\wedge\begin{pmatrix}a&0\\ 0&1\end{pmatrix}f_\omega=\omega(a)f_\omega$ (the meaning of $\lambda$ is the same as in the previous
paragraph). So now
$\pi_\theta^\wedge\begin{pmatrix}0&1\\ -1&0\end{pmatrix}f_\omega=\Gamma(\pi_\theta,\omega;\psi_n)f_{\mu\omega^{-1}}$ in the unramified case, and
$\pi_\theta^\wedge\begin{pmatrix}0&1\\ -t&0\end{pmatrix}f_\omega=\Gamma(\pi_\theta,\omega;\psi_n)f_{\mu\omega^{-1}}$ in the ramified case.
(Here we assume that $\pi_\theta$ is unitary since $\pi_\theta^\wedge$ is defined in the full {\em conjugate} dual.)
Note that the depth of $\theta$ may be bigger than the mirabolic depth of $\pi_\theta$.

In order to compute $\Gamma(\pi_\theta,\omega;\psi_n)$, we use the trick of~\cite[\S13.5]{BK}. Namely, for a unitary character
$\chi\colon\CK^\times\to\BC^\times$ such that $\chi(t)=1$, we have
$\Gamma(\pi_\theta,\omega;\psi_n)=\Gamma(\pi_\theta\otimes\chi(\det),\omega\cdot\chi;\psi_n)$,
and $\pi_\theta\otimes\chi(\det)=\pi_{\theta'}$, where $\theta'=\theta\cdot\chi(\on{Norm}_{\CE/\CK})$
(hence $\mu'=\mu\cdot\chi^2$ and $\lambda'=\lambda\cdot\chi$). Now it is possible to find a unitary character $\chi$
such that $\theta'|_{1+t\CO}\equiv1$ and $\mu'|_{1+t\CO}\equiv1$, and $\chi(t)=1$. Indeed, let $U=1+t\CO$, so that
$\CK^\times=t^\BZ\times\bF^\times\times U$. Since the residue characteristic of $\bF$ is odd, each element $u\in U$ has a unique square
root $\sqrt{u}$. We set $\chi(u)=\mu(\sqrt{u})^{-1}$, $\chi|_{\bF^\times}\equiv1$, $\chi(t)=1$.

We set $\omega'=\omega\cdot\chi$.
Note that the mirabolic depth of $\pi_{\theta'}$ is equal the mirabolic depth of $\pi_\theta$ (i.e.\ to $n$ in the unramified case,
and to $n+\frac12$ in the ramified case).
In other words, $\pi_{\theta'}|_{P(\CO)}\simeq\sigma_n\otimes\lambda'$. But the depth of $\omega'$ can change, so we must require that
$\omega'|_{1+t^{n+1}\CO}\equiv1$.
Under this assumption, the definitions
of~\S\S\ref{odd depth},\ref{even depth},\ref{Rep G'O} make sense in the present situation (for $\theta',\omega'$ in place of $\theta,\omega$).
Moreover, the proof of~Theorems~\ref{compute gamma eps},\ref{compute gamma t} goes through with two small caveats.
First, $\Gamma(\pi_{\theta'},\omega';\psi_n)=\frac{\omega'(c)^2}{\mu'(c)}\Gamma(\pi_{\theta'},\omega';\varPsi)$.
Second, right below~\eqref{identity} we cannot anymore replace
$\theta'(\frac{-xz}{z^2-\epsilon^{-1}}+\frac{-x\epsilon^{-1}}{z^2-\epsilon^{-1}}\sqrt{\epsilon})$
with $\theta'(z+\sqrt{\epsilon}^{-1})$ since $\theta'$ does not factor through $T^\CE$.
Hence in the formulas of~Theorem~\ref{compute gamma eps},
$\omega(c)^2$ will be replaced by $\frac{\omega'(c)^2}{\mu'(c)}$, and 
$\displaystyle{\theta\left(\frac{c_\omega}{c_\theta}+\sqrt{\epsilon}^{-1}\right)}$
will be replaced by $\displaystyle{\theta'\left(\frac{-\frac{c_{\omega'}}{c_{\theta'}}}{\frac{c_{\omega'}^2}{c_{\theta'}^2}-\epsilon^{-1}}+
\frac{-\epsilon^{-1}}{\frac{c_{\omega'}^2}{c_{\theta'}^2}-\epsilon^{-1}}\sqrt{\epsilon}\right)}$, while in the formula
of~Theorem~\ref{compute gamma t}, $\displaystyle{\theta(\sqrt{-t})\theta\left(1+\frac{c_\omega}{c_\theta}\sqrt{-t}\right)}$ will be
replaced by $\displaystyle{\theta'\left(\frac{-\frac{c_{\omega'}t}{c_{\theta'}}}{1+\frac{c_{\omega'}^2t}{c_{\theta'}^2}}+
\frac{\sqrt{-t}}{1+\frac{c_{\omega'}^2t}{c_{\theta'}^2}}\right)}$.

\end{document}